\renewcommand{\maketitle}{
 {\centering\textbf{\thetitle}\par
  \vspace{0.5em}
  \centering{\theauthor}\par %\scshape\MakeLowercase
  \vspace{2em}
 }
}
\newtheorem{theorem}[equation]{Theorem}
\newtheorem*{claim}{\textit{Claim}}
\newtheorem{corollary}[equation]{Corollary}
\newtheorem{lemma}[equation]{Lemma}
\theoremstyle{definition}
\newtheorem{question}[equation]{Question}
\newcommand{\metric}{\mathsf{d}}
\newcommand{\nbar}{|\!|}
\newcommand{\intd}{\,\mathrm{d}}
\newcommand{\haar}{\mathsf{m}}
\newcommand{\Haar}{\mathsf{M}}
\newcommand{\sphere}{\mathsf{S}}
\newcommand{\bigo}{\mathsf{O}}
\newcommand{\N}{\mathbb{N}}
\newcommand{\Z}{\mathbb{Z}}
\newcommand{\R}{\mathbb{R}}
\newcommand{\SL}{\mathsf{SL}}
\DeclareMathOperator{\symdiff}{\triangle}
\newcommand{\define}[1]{\textbf{#1}}
\newcommand{\ceil}[1]{\lceil {#1} \rceil}
\renewcommand{\intd}{\,\mathsf{d}}
\newcommand{\hol}{\mathsf{h}}
\renewcommand{\nbar}[1][]{|\!|_\mathsf{#1}}
\newcommand{\geod}{\mathsf{g}}
\newcommand{\comp}{\mathsf{r}}
\newcommand{\Cesaro}{Ces\`{a}ro}
\newcommand{\Ruhr}{R\"{u}hr}
\newcommand{\Poincare}{Poincar\`{e}}
\title{Uniform distribution of saddle connection lengths}
\author{Jon Chaika \and Donald Robertson with an appendix by Daniel El-Baz \and Bingrong Huang}
\begin{document}

\maketitle

\begin{abstract}
For any $\SL(2,\R)$ invariant and ergodic probability measure on any stratum of flat surfaces, almost every flat surface has the property that its non-decreasing sequence of saddle connection lengths is uniformly distributed mod one.
\end{abstract}

\section{Introduction}

By a \define{flat surface} we mean a pair $(X,\omega)$ where $X$ is a closed, compact Riemann surface of genus $g \ge 2$ and $\omega$ is a non-zero holomorphic one form on $X$.
Every non-zero holomorphic one form on $X$ has $2g-2$ zeros where $g$ is the genus of $X$.
Fixing the genus of $X$ the set $\{ (X,\omega) : \omega \ne 0 \}$ is partitioned into \define{strata} according to the possible partitions of $2g-2$.
Each connected component of a stratum carries a natural volume measure that is invariant under the action of $\SL(2,\R)$ on pairs $(X,\omega)$.
The $\SL(2,\R)$ action preserves the area of $(X,\omega)$.
Masur~\cite{MR644018} and Veech~\cite{MR644019} showed independently that the natural volume on a connected component induces on its subset $\mathcal{H}$ of unit area flat surfaces a probability measure $\Haar$ that is ergodic for the $\SL(2,\R)$ actions. 

Fix a flat surface $(X,\omega)$.
Every non-zero holomorphic one form gives, away from its finite set $\Sigma$ of zeroes, an atlas of charts to $\R^2$ whose transition maps are translations in $\R^2$.
For every $0 \le \theta < \pi$ we can use such an atlas to induce a foliation on $X \setminus \Sigma$ given locally by the lines in $\R^2$ making an angle of $\theta$ with the horiontal axis.
We can also use such an atlas to measure lengths of curves on $X$.
In particular we can calculate the lengths of the leaves in the above foliations.
A \define{saddle connection} of $(X,\omega)$ is any leaf of any of these foliations that starts and ends at a point of $\Sigma$.
The \define{holonomy} of a saddle connection $v$ is the vector
\[
\hol(v) = \left( \int\limits_v \mathrm{Re}(\omega), \int\limits_v \mathrm{Im}(\omega) \right)
\]
in $\R^2$.

In this paper we are interested in the uniform distribution of saddle connection lengths.
Recall that a sequence $n \mapsto x_n$ in $\R$ is \define{uniformly distributed} mod 1 when
\[
\lim_{N \to \infty} \frac{ | \{ 1 \le n \le N : a \le x_n \bmod 1 < b \} | }{N} = b-a
\]
for all intervals $[a,b) \subset [0,1)$.
The well-known Weyl criterion is that uniform distribution of $n \mapsto x_n$ is equivalent to
\[
\lim_{N \to \infty} \frac{1}{N} \sum_{n=1}^N \exp(2 \pi i p x_n) = 0
\]
for all $p \in \Z \setminus \{0\}$ and one can use it to prove, for example, that $n \mapsto n \alpha$ is uniformly distributed for all irrational $\alpha$.
%Uniform distribution is also equivalent to convergence of
%\[
%\frac{1}{N} \sum_{n=1}^N \delta_{x_n \bmod 1}
%\]
%to Lebesgue measure on $[0,1)$ in the weak$*$ topology.

Returning to flat surfaces (and suppressing $X$ from our notation) we write $\Lambda(\omega)$ for the set of saddle connections of $\omega$.
%Given a saddle conection $v$ of $\omega$ write $\hol(v)$ for its \define{holonomy vector} which is the vector in $\R^2$ whose coordinates are the horizontal and vertical integrals of $v$ with respect to the atlas induced by $\omega$.
For each $R > 0$ write $\Lambda(\omega;R)$ for the set of saddle connections whose holonomy vectors have length at most $R$.
Given also $S > 0$ write $\Lambda(\omega;R,S)$ for the set of saddle connections whose holonomy vectors lie within the ellipse $(R y)^2 + (S x)^2 = (RS)^2$.

It follows from minor modifications to work of Vorobets~\cite[Theorem~1.9]{MR2180238} that the projection of $\Lambda(\omega;R)$ to the unit circle along rays through the origin is uniformly distributed as $R \to \infty$ for almost every $\omega$.
Specifically, by \cite[Proposition~4.5]{MR2180238} it suffices to show that the set of saddle connection holonomies satisfy axioms (0), (A), (B), (C) and (E) therein.
That saddle connections doe satisfy these axioms follows from the definitions and results in \cite{MR1827113} just as it does for the sets Vorobets considers.

Our main result is that lengths of saddle connection holonomy vectors are also almost always uniformly distributed mod 1.
To make sense of this we fix, for each flat surface $\omega$, an enumeration $n \mapsto v_n$ of $\Lambda(\omega)$ such that $n \mapsto \nbar \hol(v_n) \nbar$ is non-decreasing.

\begin{theorem}
\label{thm:mainTheorem}
For any $\SL(2,\R)$ invariant and ergodic probability measure $\haar$ on $\mathcal{H}$ and $\haar$ almost every flat surface $\omega$ the sequence $n \mapsto \nbar \hol(v_n) \nbar$ is uniformly distributed.
\end{theorem}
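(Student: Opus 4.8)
The plan is to verify Weyl's criterion for $n\mapsto \nbar \hol(v_n)\nbar$ at $\haar$-almost every $\omega$. Fix $p\in\Z\setminus\{0\}$. Because the lengths are non-decreasing and the counting function $N(R):=|\Lambda(\omega;R)|$ grows quadratically, $N(R)\sim cR^2$ for a constant $c>0$ at $\haar$-almost every $\omega$ (the Siegel--Veech asymptotic, \cite{MR1827113}), I can reindex the Weyl average by the radius: taking $R=\nbar \hol(v_N)\nbar$ converts $\frac1N\sum_{n\le N}\exp(2\pi i p\nbar \hol(v_n)\nbar)$ into
\[
S_p(\omega,R):=\frac{1}{N(R)}\sum_{v\in\Lambda(\omega;R)}\exp\bigl(2\pi i p\nbar \hol(v)\nbar\bigr),
\]
up to the negligible ambiguity from ties on the boundary circle. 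So it suffices to show $S_p(\omega,R)\to0$ as $R\to\infty$ for almost every $\omega$, and then to intersect over the countably many $p$.

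For the first moment I would recognize the numerator as a Siegel--Veech transform. Put $F_{p,R}(x)=\exp(2\pi i p\nbar x\nbar)$ for $\nbar x\nbar\le R$ and $F_{p,R}(x)=0$ otherwise, so that $\widehat{F_{p,R}}(\omega)=\sum_{v\in\Lambda(\omega)}F_{p,R}(\hol(v))$ is that numerator. The Siegel--Veech formula \cite{MR1827113} yields
\[
\int_{\mathcal H}\widehat{F_{p,R}}\intd\haar=c\int_{\R^2}F_{p,R}=c\int_0^R \exp(2\pi i pr)\,\ell'(r)\intd r ,
\]
where $\ell(r)$ is the area of the ball of radius $r$, so $\ell'(r)$ is linear in $r$. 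Integrating by parts, the oscillatory factor $\exp(2\pi i pr)$ cancels one power of $R$, leaving $\int_{\mathcal H}\widehat{F_{p,R}}\intd\haar=\bigo_p(R)$. Since $N(R)\asymp R^2$, the $\haar$-expectation of $S_p(\cdot,R)$ is $\bigo_p(1/R)\to0$; this radial cancellation is the entire source of the phenomenon.

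To upgrade the vanishing mean to almost-everywhere convergence I would bound the variance and run Borel--Cantelli. Expanding $|\widehat{F_{p,R}}|^2=\sum_{v,w}F_{p,R}(\hol v)\overline{F_{p,R}(\hol w)}$ and integrating, the diagonal $v=w$ contributes $\int_{\mathcal H}\widehat{\mathbf1_{B_R}}\intd\haar\asymp R^2$, since $|F_{p,R}|^2=\mathbf1_{B_R}$. For the off-diagonal I would invoke a two-point (product) Siegel--Veech formula: its absolutely continuous part factors as $\approx c^2\bigl|\int_{\R^2}F_{p,R}\bigr|^2=\bigo(R^2)$, so the oscillation that saved one power of $R$ in the first moment saves two here, pulling this term down from the $\asymp R^4$ it carries for $F=\mathbf1_{B_R}$. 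Granting that the remaining singular part of the pair correlation is also $o(R^4)$, one gets $\int_{\mathcal H}|\widehat{F_{p,R}}|^2\intd\haar=o(R^4)$, hence $\int_{\mathcal H}|S_p(\cdot,R)|^2\intd\haar=\bigo(R^{-\delta})$ for some $\delta>0$; along a polynomially spaced sequence $R_k$ the Borel--Cantelli lemma gives $S_p(\omega,R_k)\to0$ almost everywhere, and monotonicity of the lengths together with $N(R)\asymp R^2$ controls $S_p$ for $R$ between consecutive $R_k$.

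The hard part is exactly that singular off-diagonal contribution. The dangerous pairs are the \emph{correlated} ones, above all the nearly parallel saddle connections coming from cylinders and the thin parts of the surface, for which $\nbar \hol v\nbar-\nbar \hol w\nbar$ is essentially locked to a single value, so the phase $\exp\bigl(2\pi i p(\nbar \hol v\nbar-\nbar \hol w\nbar)\bigr)$ need not cancel. Proving that these collinear configurations still contribute $o(R^4)$ to the second moment---so that the variance genuinely decays---requires a careful analysis of the singular part of the pair-correlation measure (or the explicit estimates supplied by the appendix), and this is the genuine obstacle separating the easy first-moment heuristic from the almost-sure statement of Theorem~\ref{thm:mainTheorem}.
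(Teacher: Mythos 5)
Your scaffolding (Weyl criterion, reindexing the average by radius, variance bound plus Borel--Cantelli along a sparse sequence of radii) matches the paper's outer structure, which does the same via Lemma~\ref{lem:tauberian}, Lemma~\ref{lem:orderToDiscs} and Theorem~\ref{thm:nrw}, and your first-moment computation via the Siegel--Veech formula is correct. But the heart of your argument is the variance bound $\int_{\mathcal H}|\widehat{F_{p,R}}|^2\intd\haar=o(R^4)$, and that is precisely the step you concede (``granting that the remaining singular part of the pair correlation is also $o(R^4)$''). This is a genuine gap, not a routine verification: for a general $\SL(2,\R)$-invariant ergodic measure $\haar$ there is no two-point Siegel--Veech formula with the control you need. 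The pair-correlation measure on $\R^2\times\R^2$ has singular components supported on $\SL(2,\R)$-invariant sets (parallel pairs, pairs with fixed cross-determinant, and pairs of \emph{distinct} saddle connections with \emph{identical} holonomy, which do occur), along which the phase $\exp(2\pi i p(\nbar \hol(v) \nbar - \nbar \hol(w) \nbar))$ need not oscillate at all; moreover, even the finiteness of the second moment of the Siegel--Veech transform is delicate in this rank-one setting (it is the subject of the Athreya--Cheung--Masur paper cited in the introduction) and is not available with effective bounds for arbitrary invariant measures. Your parenthetical fallback ``or the explicit estimates supplied by the appendix'' does not help either: the appendix treats only the torus, using the Huxley--Nowak error term, which has no analogue in a general stratum. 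So as written your proposal reduces the theorem to an unproved, and arguably harder, statement.

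It is worth seeing how the paper avoids this entirely, because the mechanism of cancellation is different from the one you are hoping for. Instead of averaging over moduli space and extracting oscillation in $R$, the paper fixes a compact set $K$ on which the error constant of Theorem~\ref{thm:nrw} is uniformly bounded, and for $\omega\in K$ averages the Weyl sum over tiny perturbations $\geod^t\comp^\theta\omega$ with $0\le t\le S$, $S\asymp R^{-(1+\eta)/2}$. Corollary~\ref{cor:ellipseEstimate} (an application of the counting theorem) transfers the perturbation from the surface to the holonomy vectors; Lemma~\ref{lem:flowLengthEstimate} linearizes $\nbar \geod^t u \nbar$ as $\nbar u \nbar + t\,\alpha(u)$ up to error $\epsilon R^{-\eta}$; then for each pair $v,w$ the $(t,\theta)$-average of the phase difference is an explicit oscillatory integral which Lemma~\ref{lem:radialFriends} shows is small whenever $|\nbar \hol(v) \nbar - \nbar \hol(w) \nbar|\ge R^\rho$, while Lemma~\ref{lem:unfriendlyCount} shows the exceptional pairs number only $\bigo(R^{3+\rho})$, negligible after dividing by $R^4$. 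Thus all ``correlated pairs'' are handled by counting, never by a pair-correlation formula, and the cancellation comes from perturbing the surface rather than from radial oscillation of the summand. Finally Theorem~\ref{thm:orbitSmear}, a measure-theoretic smearing argument using the doubling property of Haar measure on small hyperbolic discs, converts smallness of these perturbed averages into the measure bound fed to Borel--Cantelli. If you want to salvage your outline, this perturbative average is the ingredient you must substitute for the missing singular-part estimate.
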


Masur~\cite{MR955824,MR1053805} proved that for every surface $\omega$ there exists $c_1$, $c_2$ (depending on $\omega$) so that for all large $R$ we have
\begin{equation}
\label{eq:quad bounds}
c_1 R^2 \le |\Lambda(\omega;R)| < c_2 R^2
\end{equation}
(see also Vorobets \cite{arXiv:math/0307249}).
Veech~\cite{MR1670061} introduced what is now called the \define{Siegel-Veech} transform to show that there exists $c$ depending only on $\nu$ and $R_n \to \infty$ so that
\[
\lim_{i \to \infty} \frac{\Lambda(\omega;R_i)}{R_i^2}=c
\]
for $\Haar$ almost every $\omega$.
He also showed the limit exists for all surfaces in certain suborbifolds of $\mathcal{H}$. 
Building on Veech's approach, Eskin-Masur \cite{MR1827113} proved that for every $\SL(2,\R)$ invariant and $\SL(2,\R)$ ergodic probability measure $\haar$ there exists $c$ so that we have 
\begin{equation}\label{eq:a.e conv}
\underset{R \to \infty}{\lim}\, \frac{\Lambda(\omega;R)}{R^2}=c
\end{equation}
for $\haar$ almost every $\omega$.
Dozier \cite{arXiv:1705.10847} proved analogues of these results for saddle connections with holonomies in certain sectors of directions. 

The best known results for asymptotic counting that hold on all surfaces are Eskin, Mirzakhani and Mohammadi's result~\cite[Theorem 2.12]{MR3418528} that every surface has an asymptotic growth of saddle connections on average, and Dozier's result~\cite{arXiv:1701.00175} building on techniques in \cite{MR2787598} that the constants $c_1,c_2$ in \eqref{eq:quad bounds} can be chosen to depend only on the connected component of the stratum in question.
The best almost everywhere counting result is the following theorem of Nevo, \Ruhr{} and Weiss.

\begin{theorem}[{\cite[Theorem~1.1]{arXiv:1708.06263}}]
\label{thm:nrw}
For every $\SL(2,\R)$ invariant and ergodic probability measure $\haar$ on $\mathcal{H}$ there are constants $c,\kappa > 0$ such that $\haar$ almost every flat surface $\omega$ satisfies
\begin{equation}
\label{eqn:nrw}
|\Lambda(\omega; R)| = c R^2 + \bigo_\omega(R^{2(1 - \kappa)})
\end{equation}
for all $R > 0$.
\end{theorem}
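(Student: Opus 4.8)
The plan is to establish the quadratic growth with a power-saving error by expressing the counting function $\num(\omega,R) := |\Lambda(\omega;R)|$ as an average of Siegel--Veech transforms along the renormalizing $\SL(2,\R)$-action, and then to feed in effective equidistribution coming from a spectral gap. For a bounded, compactly supported $f \colon \R^2 \to \R$ set $\hat f(\omega) = \sum_{v \in \Lambda(\omega)} f(\hol(v))$. Since holonomies transform equivariantly, $\Lambda(g\omega) = g\Lambda(\omega)$, one has $\hat f(g\omega) = \widehat{f \circ g}(\omega)$ for all $g \in \SL(2,\R)$, and the Siegel--Veech formula gives $\int_{\mathcal H} \hat f \intd\haar = c_f$, a fixed multiple of $\int_{\R^2} f$. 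Writing $R = e^T$, $\geod_t = \mathrm{diag}(e^t,e^{-t})$ and using the Cartan decomposition $\SL(2,\R) = \mathsf{SO}(2)\,\{\geod_t\}\,\mathsf{SO}(2)$, in which Haar measure has radial density proportional to $\sinh(2t)$, together with the well-roundedness of Euclidean disks under $\geod_t$, I would reduce the count to
\[
\num(\omega, e^T) = \int_0^T \left( \int_{\mathsf{SO}(2)} \hat f(\geod_t k\, \omega) \intd k \right) \sinh(2t) \intd t + \mathcal E(\omega,T),
\]
where $f$ smoothly approximates the indicator of a fixed annulus and $\mathcal E$ collects the well-rounded boundary corrections and the smoothing error, the smoothing scale being taken to be a small negative power of $R$.

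The heart of the matter is to show that the inner circle average converges to the constant $\int \hat f \intd\haar$ at an exponential rate $\bigo_\omega(e^{-\delta t})$. For a bounded, square-integrable $\phi$ on $\mathcal H$, the spectral gap for the $\SL(2,\R)$-representation on $L^2_0(\mathcal H,\haar)$ -- equivalently, exponential mixing of the Teichm\"uller geodesic flow -- yields quantitative decay of matrix coefficients, hence an estimate
\[
\left\| \int_{\mathsf{SO}(2)} \phi(\geod_t k\, \cdot) \intd k - \int_{\mathcal H} \phi \intd\haar \right\|_{L^2(\haar)} \le C\, e^{-\delta t}\, \| \phi \|
\]
for some $\delta > 0$. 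Plugging $\phi = \hat f$ into this mean estimate and integrating against $\sinh(2t)$ produces the main term $cR^2$ together with a mean-square error; the constant $c$ is the Siegel--Veech constant of $\haar$.

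The genuine obstacle is that $\hat f$ is unbounded and only barely lies in $L^2$: near the cusp a single holonomy of length $\le e^{-t}$ is expanded by $\geod_t$ into the fixed annulus, so $\hat f(\geod_t k\,\omega)$ blows up on the thin part of $\mathcal H$. I would therefore split $\hat f = \hat f\,\mathbf 1_{\mathrm{thick}} + \hat f\,\mathbf 1_{\mathrm{thin}}$, apply the effective mixing estimate to the bounded thick piece, and bound the thin piece using the integrability and higher-moment estimates for Siegel--Veech transforms together with the exponential tail bounds for the reciprocal of the shortest-saddle-connection length (Eskin--Masur, Athreya). The exponent $\kappa$ then emerges from optimizing the height of the thick/thin cutoff against the spectral rate $\delta$ and the available moment exponent; this balancing, rather than any single soft step, is what prevents the error from being merely $o(R^2)$.

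Finally I would upgrade the mean-square conclusion to an almost-everywhere pointwise bound with a rate. Applying Chebyshev's inequality and Borel--Cantelli along the geometric sequence of radii $R = e^{n}$ gives $\num(\omega, e^n) = c\,e^{2n} + \bigo_\omega(e^{2n(1-\kappa)})$ for $\haar$-almost every $\omega$ and all large $n$. Since $R \mapsto \num(\omega,R)$ is non-decreasing, sandwiching an arbitrary $R$ between consecutive radii $e^{n} \le R < e^{n+1}$ and comparing the two counts yields the stated estimate $\num(\omega,R) = cR^2 + \bigo_\omega(R^{2(1-\kappa)})$ for all $R > 0$, after slightly shrinking $\kappa$ to absorb the multiplicative gaps.
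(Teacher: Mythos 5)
This theorem is not proved in the paper at all: it is imported verbatim from Nevo--\Ruhr{}--Weiss \cite{arXiv:1708.06263}, so the only meaningful comparison is with that cited source --- and your sketch is, in outline, essentially their argument: unfold the count through the Siegel--Veech transform and the Cartan decomposition, apply an effective mean ergodic theorem coming from a spectral gap, truncate to handle the unboundedness of the transform near the cusp, and upgrade the mean-square bound to an almost-sure one by Chebyshev and Borel--Cantelli along a geometric sequence of radii, finishing with the monotonicity sandwich. Two of your steps silently invoke inputs that deserve to be named, since they are where the real depth lies. First, ``the spectral gap for the $\SL(2,\R)$-representation on $L^2_0(\mathcal{H},\haar)$'' for an \emph{arbitrary} invariant ergodic $\haar$ is not a standing fact: one first needs to know $\haar$ is an affine measure (the Eskin--Mirzakhani--Mohammadi classification) and then the spectral gap for affine measures due to Avila--Gou\"{e}zel; without this, your ``equivalently, exponential mixing of the Teichm\"{u}ller geodesic flow'' has no content. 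Second, your mean estimate is stated for $\phi \in L^2$, but the Siegel--Veech transform of a compactly supported function is only known to lie in $L^{1+\delta}$ for general affine measures (Eskin--Masur \cite{MR1827113}); the $L^2$ bound of Athreya--Cheung--Masur is specific to the Masur--Veech measure. Your thick/thin truncation with moment bounds is precisely the device Nevo--\Ruhr{}--Weiss use to run the $L^2$ machinery on the bounded piece and interpolate, and the exponent $\kappa$ does emerge from that optimization, so the sketch is faithful --- but as written the second paragraph's clean $L^2$ inequality applied directly to $\hat f$ would be unjustified, and the truncation is a necessity rather than a refinement.
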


The gaps of saddle connection directions has also been studied by Athreya, the first named author, Lelievre, Uyanik and Work \cite{MR3000496}, \cite{MR3330337} and \cite{MR3567252}.
Recently Athreya, Cheung and Masur~\cite{arXiv:1711.08537} have begun investigating the $\mathrm{L}^2$ properties of the Siegel-Veech transform. 

Most of these papers to a greater or lesser extent adhere to the philosophy of renormalization dynamics, using the $\SL(2,\mathbb{R})$ action and especially the geodesic flow
\[
\geod^t = \begin{bmatrix} e^{t} & 0\\ 0 &e^{-t} \end{bmatrix}
\]
to translate questions about long saddle connections to moderate saddle connections on different surfaces.
Our approach is a little different: we still use the $\SL(2,\R)$ action, but only as a means to perturb.
Namely, we do not apply the geodesic flow $\geod^t$ long enough to make a saddle connection $\bigo(1)$ in length.
Instead, writing also
\[
\comp^\theta = \begin{bmatrix} \cos \theta & -\sin \theta \\ \sin \theta & \cos \theta \end{bmatrix}
\]
we prove that for almost every $\omega$ and for almost every $\theta, t$ the lengths of saddle connections on $\geod^t \comp^{\theta} \omega$ are uniformly distributed mod 1.
However, we emphasize that we do use Theorem~\ref{thm:nrw}, which does use renormalization dynamics. 

We remark that even if the conclusion of \cite[Theorem~1.1]{arXiv:1708.06263} were known for \textit{every} flat surface, our methods would not give uniform distribution of saddle connection lengths for every flat surface.
Therefore, the following question is in general open.

\begin{question}
Is the sequence $n \mapsto \nbar \hol(v_n) \nbar$ uniformly distributed for every flat surface?
\end{question}

However, in the presence a very strong error term for the saddle connection counting function $|\Lambda(\omega;R)|$ one can obtain uniform distribution of saddle connection lengths everywhere with a much simpler proof.
The appendix by Daniel El-Baz and Bingrong Huang proves using Huxley and Nowak's bound~\cite{MR1397317} on the counting function for primitive lattice points that the conclusion of Theorem~\ref{thm:mainTheorem} holds for every torus.

The research of J.\ Chaika was supported in part by NSF grants DMS-135500 and DMS-1452762, the Sloan foundation, a \Poincare{} chair, and a Warnock chair.
J.\ Chaika thanks Alex Eskin for encouraging him to pursue this question.
The research of D.\ Robertson was supported by NSF grant DMS-1703597.

\section{Proof of Theorem~\ref{thm:mainTheorem}}
\label{sec:saddleLengthNRW}

In this section we prove Theorem~\ref{thm:mainTheorem}.
Let $\mathcal{H}$ be the set of unit-area flat surfaces in the connected component of a stratum and write $\haar$ for the Masur-Veech probability measure $\haar$ on $\mathcal{H}$.
Let $c > 0$ and $\kappa > 0$ be as in Theorem~\ref{thm:nrw}.
Our proof of Theorem~\ref{thm:mainTheorem} has the following four ingredients.
\begin{enumerate}
\item
The Weyl criterion.
\item
An exhaustion by compact sets.
\item
A relation with orbit averages.
\item
A linear approximation to handle exponential sums.
\end{enumerate}
These steps are covered in the following four subsections.
In Subsection~\ref{subsec:finale} we combine the steps to prove Theorem~\ref{thm:mainTheorem}.

\subsection{The Weyl criterion}

By the Weyl criterion for uniform distrubtion the following result implies Theorem~\ref{thm:mainTheorem}.
Recall that $n \mapsto \hol(v_n)$ is an enumeration of $\Lambda(\omega)$ such that $n \mapsto \nbar \hol(v_n) \nbar$ is non-decreasing.

\begin{theorem}
\label{thm:weylForLengths}
For $\haar$ almost every $\omega$ we have
\begin{equation}
\label{eqn:weylForLengths}
\lim_{N \to \infty} \frac{1}{N} \sum_{n=1}^N \exp(2 \pi i p \nbar \hol(v_n) \nbar) = 0
\end{equation}
for every $p \in \N$.
\end{theorem}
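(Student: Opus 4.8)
The plan is to apply the Weyl criterion, so everything reduces to estimating the exponential sum in \eqref{eqn:weylForLengths} for each fixed $p \in \N$. The fundamental difficulty is that $n \mapsto \nbar \hol(v_n) \nbar$ is an abstract enumeration ordered by length; to turn the sum over the first $N$ saddle connections into something tractable, I would reindex by a length cutoff $R$ rather than by the count $N$. Concretely, since $n \mapsto \nbar \hol(v_n) \nbar$ is non-decreasing, summing over $1 \le n \le N$ is the same as summing over all $v \in \Lambda(\omega;R)$ for the radius $R = \nbar\hol(v_N)\nbar$, and by the quadratic asymptotics in \eqref{eq:a.e conv} (or Theorem~\ref{thm:nrw}) we have $N \sim c R^2$. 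Thus, after replacing the normalizing factor $1/N$ by $1/(cR^2)$, it is enough to prove
\[
\lim_{R \to \infty} \frac{1}{R^2} \sum_{v \in \Lambda(\omega;R)} \exp(2\pi i p \nbar\hol(v)\nbar) = 0
\]
for $\haar$ almost every $\omega$ and every $p \in \N$.

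The next step is to rewrite this radial exponential sum as a Stieltjes integral against the counting function. Writing $F_\omega(R) = |\Lambda(\omega;R)|$ and summing $\exp(2\pi i p r)$ against $\intd F_\omega(r)$ over $r \in (0,R]$, I would integrate by parts to transfer the oscillation onto $F_\omega$. The point of invoking Theorem~\ref{thm:nrw} is precisely here: substituting $F_\omega(r) = c r^2 + \bigo_\omega(r^{2(1-\kappa)})$ splits the integral into a main term coming from $c r^2$ and an error term controlled by $r^{2(1-\kappa)}$. The main term is a clean deterministic integral $\int_0^R \exp(2\pi i p r)\, \intd(c r^2)$, which after dividing by $R^2$ oscillates and averages out, contributing $\bigo(1/R) \to 0$; the error term, after the integration by parts, is of size $\bigo_\omega(R^{2(1-\kappa)})$, so dividing by $R^2$ gives $\bigo_\omega(R^{-2\kappa}) \to 0$. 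The power-saving $\kappa > 0$ is exactly what makes the error negligible against the quadratic normalization.

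The subtlety the excerpt's four-step outline is flagging — an exhaustion by compact sets, a passage to orbit averages, and a linear approximation for the exponential sums — suggests the actual argument does \emph{not} have a pointwise power-saving error term available on the nose for the length counting function, but only for $|\Lambda(\omega;R)|$ itself. The issue is that a power-saving error in the \emph{count} does not immediately convert into cancellation in the \emph{phased} sum $\sum \exp(2\pi i p \nbar\hol(v)\nbar)$, because Theorem~\ref{thm:nrw} gives no control over how the lengths distribute mod $1$ within a dyadic shell. The plan would therefore be to first reduce, via an $\SL(2,\R)$ orbit-average argument over the directions $\comp^\theta$ and times $\geod^t$, to showing the averaged exponential sum is small, using the ergodicity of $\haar$ and Fubini to pass from almost-every $(\theta,t)$ back to almost-every $\omega$. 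Within a single shell one linearizes $\nbar\hol(\geod^t\comp^\theta v)\nbar$ in the perturbation parameters so that the sum becomes a genuine oscillatory (Weyl-type) sum in $t$ or $\theta$, whose cancellation can be extracted, and then Theorem~\ref{thm:nrw} is used to control the total number of terms and the error incurred by the linear approximation.

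I expect the main obstacle to be this last point: establishing genuine cancellation in the phased sum over saddle connections in a shell. Unlike a bare count, the sum $\sum \exp(2\pi i p \nbar\hol(v)\nbar)$ requires the lengths to be \emph{equidistributed mod $1$} within the shell, and there is no a priori reason the saddle connection lengths of a fixed surface should equidistribute — indeed the open Question in the excerpt says they may not for every surface. The role of the perturbation by $\geod^t\comp^\theta$ is to introduce a smoothly varying phase so that, after averaging in the perturbation parameters and exploiting the power-saving of Theorem~\ref{thm:nrw} to bound the number of relevant terms, stationary-phase or van der Corput style cancellation can be harvested; making the linear approximation uniform enough to survive summation over $\bigo(R^2)$ terms while keeping the accumulated error below $R^2$ is where the delicate bookkeeping will lie.
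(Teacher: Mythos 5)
Your second paragraph contains a concrete miscalculation, and it is precisely the miscalculation this paper exists to get around. In the Stieltjes integration by parts, the error $E_\omega(r) = |\Lambda(\omega;r)| - cr^2$ does not enter only through the boundary term $e^{2\pi i p R}E_\omega(R) = \bigo_\omega(R^{2(1-\kappa)})$: it must also be integrated against $2\pi i p\, e^{2\pi i p r}\intd r$ over $[0,R]$, and since Theorem~\ref{thm:nrw} gives only the one-sided size bound $|E_\omega(r)| \le \xi(\omega) r^{2(1-\kappa)}$ (no sign, no cancellation), that integral can only be bounded by $\bigo_\omega\bigl(p R^{3-2\kappa}\bigr)$. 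After dividing by $R^2$ you are left with $\bigo_\omega\bigl(p R^{1-2\kappa}\bigr)$, which diverges unless $\kappa > 1/2$, whereas the $\kappa$ of Nevo--\Ruhr{}--Weiss is merely some small positive constant. This is exactly why the appendix can run your computation for the torus --- Huxley and Nowak's bound \cite{MR1397317} has error $o(y)$, i.e.\ effectively $\kappa > 1/2$ --- while the body of the paper cannot. You then correctly sense in your third paragraph that a power saving in the count cannot by itself force cancellation in the phased sum, but that observation retracts the only complete argument in your proposal.

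The sketch you offer in its place has the right cast of characters (perturbation by $\geod^t \comp^\theta$, linearization of the length, Theorem~\ref{thm:nrw} for counting) but is missing both mechanisms that make the paper's proof close. First, the cancellation is extracted from a second moment, not from the averaged sum: the paper bounds the $(\theta,t)$-average of the \emph{squared modulus}, which expands into a double sum over pairs $(v,w)$; off-diagonal pairs with $\bigl|\nbar\hol(v)\nbar - \nbar\hol(w)\nbar\bigr| \ge R^\rho$ oscillate because the linearized relative phase is $t A(v,w)\sin(\phi - 2\theta)$ with $A(v,w)$ at least the length gap (Lemma~\ref{lem:radialFriends}), and near-diagonal pairs are few by another application of Theorem~\ref{thm:nrw} (Lemma~\ref{lem:unfriendlyCount}). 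Averaging the sum itself term by term, which is what your ``Weyl-type sum in $t$ or $\theta$'' describes, puts the absolute value outside the average and cannot feed any Chebyshev-type bound on a bad set. Second, and more fundamentally, ``ergodicity of $\haar$ and Fubini'' cannot transfer the averaged estimate back to almost every $\omega$: the averaging window $S = \sqrt{\epsilon/(42 R^{1+\eta})}$ shrinks with $R$, so there is no fixed positive-measure parameter set to which Fubini applies, and the exponential sum is not stable under perturbations of size $S$ --- an element of $D(S)$ moves an individual phase by roughly $2\pi p S R \sim p R^{(1-\eta)/2} \to \infty$ --- so smallness at the perturbed surfaces says nothing about the surface itself. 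The paper's substitute for this step is Theorem~\ref{thm:orbitSmear}, a covering/doubling argument in $\SL(2,\R)$ showing that the bad set $N$ at scale $R$ satisfies $\haar(N)^2 = \bigo\bigl(R^\sigma M_R\bigr)$ with $M_R$ the smeared second moment; Theorem~\ref{thm:mainEstimate} then makes $M_R = \bigo(R^{-\gamma})$, and Borel--Cantelli along the sparse sequence $\ceil{\tau^J}$ finishes --- which is the reason the Tauberian Lemma~\ref{lem:tauberian} is needed at all, a reduction your continuum limit in $R$ silently skips.
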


Fix $p \in \N$ and write $\chi_p(x) = \exp(2 \pi i p x)$ for any $x \in \R$.
To prove that \eqref{eqn:weylForLengths} holds almost surely we use the following theorem.

\begin{lemma}
\label{lem:tauberian}
Fix a flat surface $\omega$.
If there is a sequence $\tau_1 > \tau_2 > \tau_3 > \cdots \to 1$ such that
\begin{equation}
\label{eqn:tauberianForLengths}
\lim_{J \to \infty} \frac{1}{\ceil{\tau_i^J}} \sum_{n=1}^{\ceil{\tau_i^J}} \chi_p (\nbar \hol(v_n) \nbar) = 0
\end{equation}
for all $i \in \N$ then \eqref{eqn:weylForLengths} holds.
\end{lemma}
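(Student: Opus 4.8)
The plan is to deduce the full \Cesaro{} convergence \eqref{eqn:weylForLengths} from the hypothesis \eqref{eqn:tauberianForLengths} along the sparse subsequences $N = \ceil{\tau_i^J}$ by a standard sandwiching argument, exploiting only that the summands are bounded: $|\chi_p(\nbar \hol(v_n) \nbar)| = 1$ for every $n$. The key point is that consecutive indices $\ceil{\tau_i^J}$ differ only multiplicatively by a factor tending to $\tau_i$, which is close to $1$, so the terms ``missed'' between two successive subsequence indices contribute at most something of size $\tau_i - 1$ to the average.

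First I would fix $i \in \N$ and set $N_J = \ceil{\tau_i^J}$. Using $\tau_i^J \le N_J < \tau_i^J + 1$ and $N_J \to \infty$, an elementary computation shows $N_{J+1}/N_J \to \tau_i$, and hence $(N_{J+1} - N_J)/N_J \to \tau_i - 1$, as $J \to \infty$. Next, given a large $N$, I would choose $J = J(N)$ with $N_J \le N < N_{J+1}$ and split the average as
\[
\frac{1}{N} \sum_{n=1}^N \chi_p(\nbar \hol(v_n) \nbar) = \frac{N_J}{N} \cdot \frac{1}{N_J} \sum_{n=1}^{N_J} \chi_p(\nbar \hol(v_n) \nbar) + \frac{1}{N} \sum_{n=N_J+1}^N \chi_p(\nbar \hol(v_n) \nbar).
\]
For the first summand, $N_J/N \le 1$ while \eqref{eqn:tauberianForLengths} forces the average over $1 \le n \le N_J$ to $0$, so this term vanishes in the limit. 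For the second summand, the triangle inequality together with $|\chi_p| = 1$ gives the bound $(N - N_J)/N \le (N_{J+1} - N_J)/N_J$, whose $\limsup$ is $\tau_i - 1$. Since $N \to \infty$ forces $J(N) \to \infty$, combining these yields
\[
\limsup_{N \to \infty} \left| \frac{1}{N} \sum_{n=1}^N \chi_p(\nbar \hol(v_n) \nbar) \right| \le \tau_i - 1.
\]
Finally I would let $i \to \infty$: as $\tau_i \to 1$ the right-hand side tends to $0$, establishing \eqref{eqn:weylForLengths}.

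I do not expect any serious obstacle here, since this is the classical device for passing from a geometric subsequence to the full sequence of \Cesaro{} means. The only mild care needed is the elementary verification that the ceilings $\ceil{\tau_i^J}$ have consecutive ratios converging to $\tau_i$, so that the rounding introduced by $\ceil{\cdot}$ does not spoil the multiplicative gap control; and the observation that it is boundedness of the summands, rather than any finer arithmetic structure of the saddle connection lengths, that drives the entire estimate.
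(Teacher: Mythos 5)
Your proposal is correct and is essentially the paper's own argument: both exploit only the boundedness of the summands and the fact that consecutive indices $\ceil{\tau_i^J}$ differ by a multiplicative factor tending to $\tau_i$, so that the average at an arbitrary $N$ is controlled by the average at the nearest subsequence index up to an error of order $\tau_i - 1$. The only difference is organizational — the paper fixes $\epsilon$ first and picks $i$ with $\tau_i - 1$ small, comparing the two \Cesaro{} averages directly, while you fix $i$, prove a $\limsup$ bound of $\tau_i - 1$, and then let $i \to \infty$ — which is the same estimate packaged differently.
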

\begin{proof}
This is a fact about \Cesaro{} convergence.
Fix a sequence $\tau_1 > \tau_2 > \tau_3 > \cdots \to 1$.
Suppose that a bounded sequence $n \mapsto b_n$ has the property that
\[
\lim_{J \to \infty} \frac{1}{\ceil{\tau_i^J}} \sum_{n=1}^{\ceil{\tau_i^J}} b_n = \beta
\]
for all $i \in \N$.
We verify that $n \mapsto b_n$ \Cesaro{} converges to $\beta$.
Indeed, fix $\epsilon > 0$ and $i \in \N$ such that
\begin{equation}
\label{eqn:tauberianChoice}
1 < \tau_i < 1 + \frac{\epsilon}{4 \nbar b \nbar[\infty]}
\end{equation}
and choose $J \in \mathbb{N}$ so large that
\[
\max \left\{ \left| \frac{1}{\ceil{\tau_i^j}} \sum_{n=1}^{\ceil{\tau_i^j}} b_n - \beta \right|, \frac{2 \nbar b \nbar[\infty]}{\ceil{\tau_i^j}} \right\}
<
\frac{\epsilon}{3}
\]
holds for all $j \ge J$.
Put $N = \ceil{\tau_i^J}$.
Given $M > N$ write $M = \ceil{\tau_i^j} + k$ with $k \in \mathbb{N}$ and $j$ maximal.
Now
\[
\tau_i^j + k
\le
\ceil{\tau_i^j} + k
<
\tau_i^{j+1} + 1
<
\left( 1 + \frac{\epsilon}{4 \nbar b \nbar[\infty]} \right) \tau_i^j + 1
\]
giving $4k \nbar b \nbar[\infty] \le \epsilon \tau_i^j + 4 \nbar b \nbar[\infty]$.
So
\[
\left| \frac{1}{\ceil{\tau_i^j}} \sum_{n=1}^{\ceil{\tau_i^j}} b_n - \frac{1}{\ceil{\tau_i^j} + k} \sum_{n=1}^{\ceil{\tau_i^j} + k} b_n \right|
\le
\frac{2k \nbar b \nbar[\infty]}{\ceil{\tau_i^j} + k}
\le
\frac{\epsilon}{2} \frac{\tau_i^j}{\ceil{\tau_i^j} + k} + \frac{2 \nbar b \nbar[\infty]}{\ceil{\tau_i^j} + k}
\le
\epsilon
\]
as desired.
\end{proof}

We conclude this subsection by replacing \eqref{eqn:tauberianForLengths} with an average over all saddle connections whose holonomy vectors have length at most $\sqrt{\ceil{\tau^J}/c}$.

\begin{lemma}
\label{lem:orderToDiscs}
If
\begin{equation}
\label{eqn:orderToDiscs}
\lim_{J \to \infty} \frac{c}{\ceil{\tau^J}} \sum_{v \in \Lambda \left( \omega;\sqrt{\ceil{\tau^J}/c} \right) } \chi_p(\nbar \hol(v) \nbar) = 0
\end{equation}
for $\haar$ almost every $\omega$ then
\[
\lim_{J \to \infty} \frac{1}{\ceil{\tau^J}} \sum_{n=1}^{\ceil{\tau^J}} \chi_p(\nbar \hol(v_n) \nbar) = 0
\]
for $\haar$ almost every $\omega$.
\end{lemma}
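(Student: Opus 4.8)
The plan is to show that the disc sum in \eqref{eqn:orderToDiscs} and the order sum differ by a negligible number of terms, so that the hypothesis transfers directly to the conclusion. Fix $\tau > 1$ and abbreviate $N = \ceil{\tau^J}$ and $R = \sqrt{N/c}$, chosen precisely so that $cR^2 = N$. First I would observe that, because the enumeration $n \mapsto v_n$ is non-decreasing in $\nbar\hol(v_n)\nbar$, every saddle connection of length at most $R$ precedes every saddle connection of greater length; hence, writing $M = |\Lambda(\omega;R)|$, the disc $\Lambda(\omega;R)$ is exactly the initial segment $\{v_1,\dots,v_M\}$, with ties playing no role.

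Granting this, the index set of the disc sum is $\{1,\dots,M\}$ while that of the order sum is $\{1,\dots,N\}$, so their symmetric difference has exactly $|M - N|$ elements. Since $|\chi_p| \le 1$, I would bound
\[
\left| \sum_{v \in \Lambda(\omega;R)} \chi_p(\nbar\hol(v)\nbar) - \sum_{n=1}^N \chi_p(\nbar\hol(v_n)\nbar) \right| \le |M - N|.
\]
The only analytic input is then Theorem~\ref{thm:nrw}, which for $\haar$ almost every $\omega$ gives $M = cR^2 + \bigo_\omega(R^{2(1-\kappa)}) = N + \bigo_\omega(N^{1-\kappa})$, whence $|M - N| = \bigo_\omega(N^{1-\kappa})$. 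Dividing the displayed bound by $N$ shows that the two normalized sums differ by $\bigo_\omega(N^{-\kappa})$, which tends to $0$ as $J \to \infty$.

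To finish, I would restrict to the full-measure set of $\omega$ on which both the hypothesis \eqref{eqn:orderToDiscs} and the conclusion of Theorem~\ref{thm:nrw} hold. Since $c$ is a fixed positive constant, the hypothesis is equivalent to $\tfrac1N \sum_{v \in \Lambda(\omega;R)} \chi_p(\nbar\hol(v)\nbar) \to 0$; adding the $\bigo_\omega(N^{-\kappa})$ discrepancy then yields $\tfrac1N \sum_{n=1}^N \chi_p(\nbar\hol(v_n)\nbar) \to 0$, which is the desired conclusion.

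I do not anticipate a serious obstacle: essentially all of the content is packaged into the counting estimate of Theorem~\ref{thm:nrw}, and the rest is the elementary remark that truncating by length and truncating by order agree up to the error in that estimate. The one place that rewards care is the initial-segment identity $\Lambda(\omega;R) = \{v_1,\dots,v_M\}$ and the resulting exact count of the symmetric difference; once these are in place the factor $c$ appearing in \eqref{eqn:orderToDiscs} is immaterial, since it alters neither whether a limit vanishes nor the order of the error term.
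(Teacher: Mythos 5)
Your proposal is correct and is essentially the paper's own argument: the paper likewise invokes Theorem~\ref{thm:nrw} to bound the difference of the two sums by $\frac{C_\omega}{c^{1-\kappa}}\ceil{\tau^J}^{1-\kappa}$ (since $|\chi_p|\le 1$ and the disc is an initial segment of the non-decreasing enumeration) and then divides by $\ceil{\tau^J}$. Your write-up merely makes explicit the initial-segment identity and the symmetric-difference count that the paper leaves implicit.
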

\begin{proof}
For almost every $\omega$ there is from Theorem~\ref{thm:nrw} a constant $C_\omega > 0$ with
\[
\left|
\sum_{n=1}^{\ceil{\tau^J}} \chi_p( \hol \nbar v_n \nbar)
-
\sum_{v \in \Lambda \left( \omega;\sqrt{\ceil{\tau^J}/c} \right) } \chi_p( \hol(\nbar v \nbar))
\right|
\le
\frac{C_\omega}{c^{1 - \kappa}} \ceil{\tau^J}^{1 - \kappa}
\]
for all $J \in \N$.
Dividing by $\ceil{\tau^J}$ gives the desired result.
\end{proof}

\subsection{An exhaustion by compact sets}

To relieve notation slightly we now consider the expression
\begin{equation}
\label{eqn:radiusAverage}
\frac{1}{R^2} \sum_{v \in \Lambda(\omega;R)} \chi_p( \nbar \hol(v) \nbar)
\end{equation}
for $R > 0$.
Our goal is to estimate the size of this average.
We do so on compact sets that exhaust a full-measure subset of $\mathcal{H}$.
To define our compact sets write $\ell(\omega)$ for the length of the shortest saddle connection of $\omega$ and
\[
\xi(\omega) = \sup \left\{ \frac{\Big| |\Lambda(\omega;R)| - c R^2 \Big|}{R^{2(1-\kappa)}} : R > 0 \right\}
\]
for the $\haar$ almost-surely defined optimal constant in \eqref{eqn:nrw}.
Fix $r > 0$ small and $\Xi > 0$ large.
Let $K$ be a compact subset of $\ell^{-1}[r,\infty)$ on which both
\[
\haar(\ell^{-1}[r,\infty) \setminus K) < \frac{1}{r}
\]
and
\begin{equation}
\label{eqn:boundOnK}
\sup \{ \xi(\omega) : \omega \in K \} \le \Xi
\end{equation}
hold.
We can find such a compact set because $\xi$ is measurable.
As $r \to 0$ and $\Xi \to \infty$ our compact set $K$ exhausts $\mathcal{H}$ up to a set of $\haar$ measure zero.

To describe the behavior of \eqref{eqn:radiusAverage} on $K$ we introduce some constants.
Fix $0 < \eta < 1$ and choose $\rho$ such that
\[
1 > \rho > \max \left\{ \frac{1+\eta}{2} ,1 - 2\kappa \right\}
\]
with $\kappa$ coming from Theorem~\ref{thm:nrw}.
There is $\delta > 0$ such that
\begin{equation}
\label{eqn:delta}
\left| \frac{\exp(2 \pi i s) - 1}{2\pi i s} - 1 \right| < \epsilon
\end{equation}
whenever $|s| < \delta$.
We only consider below values of $R$ large enough that
\begin{equation}
\label{eqn:radius}
R^{1 + \eta}
\ge
\max \left\{ \frac{5\epsilon p^2}{42 \delta^2}, \frac{\epsilon}{42} \right\}
\end{equation}
and fix the relationship
\begin{equation}
\label{eqn:geodTime}
S = \sqrt{\frac{\epsilon}{42 R^{1+\eta}}}
\end{equation}
between $S$ and $R$.
Note that \eqref{eqn:radius} implies $S \le 1$.
Although $S$ and $R$ are related we think of $S$ as the small amount of time we will geodesic flow for, and $R$ as a large bound on the lengths of our saddle connections.

\subsection{A relation with orbit averages}

The main result of this section is a relation between \eqref{eqn:radiusAverage} on $K$ and its $\SL(2,\R)$ average over a small disc.

\begin{theorem}
\label{thm:orbitSmear}
We have
\begin{equation}
\label{eqn:orbitSmear}
\begin{aligned}
&
\haar \left(
\left\{
\omega \in K : \left| \frac{1}{R^2} \sum_{v \in \Lambda(\omega;R)} \chi_p(\nbar \hol(v) \nbar) \right|^2
\ge
\frac{1}{R^\sigma}
\right\} \right)^2
\\
={}
&
\bigo \Bigg(
R^\sigma
\int\limits_K
\frac{1}{2\pi} \int\limits_0^{2\pi}
\frac{1}{S} \int\limits_0^S
\left|
\frac{1}{R^2} \sum_{v \in \Lambda(\geod^t \comp^\theta \omega;R)} \chi_p(\nbar \hol(v) \nbar)
\right|^2
\intd t
\intd \theta
\intd \haar(\omega)
\Bigg)
\end{aligned}
\end{equation}
for all $R$ satisfying \eqref{eqn:radius} and all $\sigma > 0$.
\end{theorem}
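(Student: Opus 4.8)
The plan is to combine Chebyshev's inequality with the $\SL(2,\R)$-invariance of $\haar$, the key point being that replacing an integrand by its average over the flow box $\{\geod^t\comp^\theta : 0 \le t \le S,\ 0 \le \theta < 2\pi\}$ leaves its integral over all of $\mathcal{H}$ unchanged. Write $\Phi_R(\omega)$ for the average in \eqref{eqn:radiusAverage} and let $B$ denote the set on the left of \eqref{eqn:orbitSmear}, so that $|\Phi_R(\omega)|^2 \ge R^{-\sigma}$ for every $\omega \in B$. Chebyshev's inequality then gives
\[
\haar(B) \le R^\sigma \int_B |\Phi_R(\omega)|^2 \intd\haar(\omega) \le R^\sigma \int_K |\Phi_R(\omega)|^2 \intd\haar(\omega).
\]
Since $\haar(B) \le 1$ we have $\haar(B)^2 \le \haar(B)$, so the square on the left of \eqref{eqn:orbitSmear} costs nothing: it suffices to bound $\int_K |\Phi_R(\omega)|^2 \intd\haar(\omega)$ by a constant multiple of the orbit average $I$ on the right of \eqref{eqn:orbitSmear}.

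To produce $I$ I would rewrite it using invariance. Because $\ell$ and $\xi$ are invariant under the rotations $\comp^\theta$ (rotations preserve holonomy lengths), I may assume $K$ is $\comp^\theta$-invariant; then $\Phi_R$ is itself $\comp^\theta$-invariant and, after the measure-preserving substitution $\omega \mapsto \geod^t\comp^\theta\omega$ together with Fubini, the $\theta$-average drops out and one finds
\[
I = \int_{\mathcal{H}} w(\omega)\, |\Phi_R(\omega)|^2 \intd\haar(\omega), \qquad w(\omega) = \frac{1}{S}\int_0^S \mathbf{1}_K(\geod^{-t}\omega) \intd t.
\]
Here $w(\omega) \in [0,1]$ is the fraction of the backward geodesic segment $\{\geod^{-t}\omega : 0 \le t \le S\}$ lying in $K$. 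Thus the desired estimate $\int_K |\Phi_R|^2 \le CI$ follows once $w$ is bounded below by a fixed positive constant off a negligible part of $K$: on $\{w \ge 1/2\}$ one has $\int_{\{w \ge 1/2\}} |\Phi_R|^2 \le 2\int w |\Phi_R|^2 \le 2I$.

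The heart of the matter, and the step I expect to be hardest, is this lower bound on $w$, which amounts to controlling the $\haar$-mass that the short geodesic flow pushes across the boundary of $K$ — note that $K$ cannot be flow-invariant, since by ergodicity a flow-invariant compact proper set would be null. I would bound $\haar(\{\omega \in K : w(\omega) < 1/2\}) \le 2\int_K (1-w)\intd\haar$ by the flux
\[
\frac{2}{S}\int_0^S \haar\big(K \setminus \geod^{t} K\big) \intd t,
\]
which tends to $0$ as $S \to 0$ by continuity of the measure-preserving flow, using that $S = \sqrt{\epsilon/(42 R^{1+\eta})} \to 0$ from \eqref{eqn:geodTime}. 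The contribution of this small exceptional set to $\int_K |\Phi_R|^2$ is then absorbed using the uniform bound $|\Phi_R(\omega)| \le c + \Xi$ valid throughout $K$ by Theorem~\ref{thm:nrw} and \eqref{eqn:boundOnK}. Combining with the Chebyshev estimate gives $\haar(B) \le C R^\sigma I$, hence $\haar(B)^2 \le \haar(B) \le C R^\sigma I$, which is \eqref{eqn:orbitSmear}. The delicate point, where the compactness and rotation-invariance of $K$ and the relation \eqref{eqn:geodTime} between $S$ and $R$ must be used in tandem, is to ensure this boundary-flux term is genuinely dominated by $R^\sigma I$ rather than merely small, since $I$ itself may be extremely small for the surfaces and frequencies $p$ at hand.
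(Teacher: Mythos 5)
Your Chebyshev-plus-invariance reduction is fine as far as it goes: writing $\Phi_R(\omega)$ for \eqref{eqn:radiusAverage}, taking $K$ rotation-invariant (legitimate, since $\ell$ and $\xi$ are rotation-invariant) does turn the right-hand side of \eqref{eqn:orbitSmear} into $R^\sigma\int_{\mathcal{H}} w\,|\Phi_R|^2\intd\haar$ with $w(\omega)=\frac1S\int_0^S 1_K(\geod^{-t}\omega)\intd t$. The genuine gap is exactly the step you flag as the heart of the matter, and it is fatal: you need the flux term $\frac1S\int_0^S\haar(K\setminus\geod^t K)\intd t$ to be dominated by a constant times $I$ (your orbit average), not merely to tend to $0$. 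No such bound is available. The set $K$ is produced by inner regularity of $\haar$ applied to the merely measurable function $\xi$, so the continuity of $t\mapsto\haar(K\symdiff\geod^t K)$ carries no rate at all; meanwhile $I$ is precisely the quantity that Theorem~\ref{thm:mainEstimate} later shows is polynomially small. Your final estimate has the shape $\haar(N)\le 2R^\sigma I+R^\sigma(c+\Xi)^2\delta(S)$ with $\delta(S)\to 0$ at an unknown, possibly arbitrarily slow, rate and $R^\sigma\to\infty$: the second term is not $\bigo(R^\sigma I)$, is not even $o(1)$, and so neither the theorem nor the Borel--Cantelli argument of Subsection~\ref{subsec:finale} follows. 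Note also that you are trying to prove the \emph{stronger} inequality with $\haar(N)$ rather than $\haar(N)^2$ on the left; the square in the statement is not decorative --- it is exactly the loss built into the argument that does work.

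The paper's proof never compares $\int_K|\Phi_R|^2$ with $I$. It applies the Chebyshev-type bound to the smeared indicator, using $R^{-\sigma}1_N(g\omega)\le|\Phi_R(g\omega)|^2$ pointwise and the Cartan decomposition to convert the Haar average over the hyperbolic disc $D(S)$ into the $(t,\theta)$ average, and then the entire burden falls on the purely measure-theoretic inequality
\[
\int\limits_N \frac{1}{\mu(D(S/4))}\int\limits_{D(S)} 1_N(g\omega)\intd\mu(g)\intd\haar(\omega)\ \ge\ \frac{\haar(N)^2}{4},
\]
proved for an \emph{arbitrary} measurable $N$ using only two-sided invariance of Haar measure on $\SL(2,\R)$, the doubling estimate \eqref{eqn:doublingConstant}, and a layer-cake computation with $h(\omega)=\mu(N\omega^{-1}\cap D(S/4))/\mu(D(S/4))$. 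No regularity of $K$ or of the flow enters, which is what makes the statement true at this level of generality, at the cost of squaring $\haar(N)$. To rescue your route for the theorem as stated you would need $\delta(S)=\bigo(I)$, which is hopeless: $I$ measures cancellation in exponential sums and can be far smaller than any geometric boundary flux. Even for the weaker goal of the final application (a bound $\haar(N)=\bigo(R^{-\psi})$) you would need a polynomial-in-$S$ rate for $\haar(K\setminus\geod^t K)$, i.e.\ a compact exhaustion with quantitative near-invariance under the geodesic flow --- an additional input the paper neither has nor needs.
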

\begin{proof}
Let
\[
N = \left\{ \omega \in K : \left| \frac{1}{R^2} \sum_{v \in \Lambda(\omega;R)} \chi_p(\nbar \hol(v) \nbar) \right|^2 \ge \frac{1}{R^{\sigma}} \right\}
\]
be the set whose measure we wish to bound.
Write $\mu$ for Haar measure on $\SL(2,\R)$, which is left and right invariant because $\SL(2,\R)$ is unimodular.
For each $s > 0$ write
\[
D(s) = \{ [\begin{smallmatrix} a & b \\ c & d \end{smallmatrix}] \in \SL(2,\R) : \metric(i, \tfrac{ai+b}{ci+d}) < s \}
\]
where $\metric$ is the hyperbolic distance on the upper half-plane determined by the metric of constant curvature $-4$.
In this metric the area of a hyperbolic disc of radius $r$ is $\pi (\sinh(\frac{r}{2}))^2$ so whenever $s < 1$ we have
\begin{equation}
\label{eqn:doublingConstant}
1 \le \frac{\mu(D(2s))}{\mu(D(s))} = (2 \cosh(\tfrac{s}{2}) )^2 \le 9
\end{equation}
using the hyperbolic double angle formula.

The defining property of $N$ gives
\begin{align*}
&
\frac{1}{R^{\sigma}}
\int\limits_N
\frac{1}{\mu(D(S))}
\int\limits_{D(S)} 1_N(g \omega) \intd \mu(g) \intd \haar(\omega)
\\
\le
&
\int\limits_K
\frac{1}{\mu(D(S))} \int\limits_{D(S)}
\left| \frac{1}{R^2} \sum_{v \in \Lambda(g\omega;R)} \chi_p(\nbar \hol(v) \nbar) \right|^2
\intd \mu(g) \intd \haar(\omega)
\\
=
&
\int\limits_K
\frac{1}{(\sinh(\frac{S}{2}))^2} \int\limits_0^{2\pi} \int\limits_0^{2\pi} \int\limits_0^S
\sinh(t) \left| \frac{1}{R^2} \sum_{v \in \Lambda(\comp^{\theta_1} \geod^t \comp^{\theta_2}\omega;R)} \chi_p(\nbar \hol(v) \nbar) \right|^2
\intd t \intd \theta_1 \intd \theta_2 \intd \haar(\omega)
\end{align*}
after using the Cartan integral formula~\cite[Proposition~5.28]{MR1880691} with $\mu$ normalized appropriately.
We can eliminate the integral over $\theta_1$ because the rotation $\comp^{\theta_1}$ doesn't change the sum.
Together with some simple estimates we arrive at
\[
\frac{1}{R^{\sigma}}
\int\limits_N
\frac{1}{\mu(D(S))}
\int\limits_{D(S)} 1_N(g \omega) \intd \mu(g) \intd \haar(\omega)
\le
C
\int\limits_K
\frac{1}{2\pi} \int\limits_0^{2\pi}
\frac{1}{S} \int\limits_0^S 
\left| \frac{1}{R^2} \sum_{v \in \Lambda(\geod^t \comp^\theta \omega;R)} \chi_p(\nbar \hol(v) \nbar) \right|^2
\intd t \intd \theta \intd \haar(\omega)
\]
where $C$ is some absolute positive constant because $\sinh(S) \sim S$.
It therefore suffices by \eqref{eqn:doublingConstant} to prove
\begin{equation}
\label{eqn:smearBound}
\int\limits_N \frac{1}{\mu(D(S/4))}
\int\limits_{D(S)}
1_N(g\omega)
\intd \mu(g)
\intd \haar(\omega)
\ge
\frac{\haar(N)^2}{4}
\end{equation}
holds.

Given $E \subset \mathcal{H}$ and $\omega \in \mathcal{H}$ write $E \omega^{-1} = \{ g \in \SL(2,\R) : g \omega \in E \}$.
Put
\[
h(\omega) = \frac{\mu(N \omega^{-1} \cap D(S/4))}{\mu(D(S/4))}
\]
and note that
\begin{equation}
\label{eqn:measureBadSet}
\begin{aligned}
\haar(N)
&
=
\frac{1}{\mu(D(S/4))} \int\limits_{D(S/4)} \int\limits_{\mathcal{H}} 1_{g^{-1} N}(\omega) \intd \haar(\omega) \intd \mu(g)
\\
&
=
\int\limits_{\mathcal{H}} \frac{1}{\mu(D(S/4))} \int\limits_{D(S/4)} 1_N(g \omega) \intd \mu(g) \intd \haar(\omega)
=
\int\limits_\mathcal{H} h \intd \haar
\end{aligned}
\end{equation}
by invariance of $\haar$ and Fubini.
Write
\[
U_q = \bigcup_{h(\omega) \ge q} D(S/2) \cdot \omega
\]
for any $0 < q < 1$.
As in \eqref{eqn:measureBadSet} we have
\[
\haar(N \cap U_q)
=
\int\limits_\mathcal{H} \frac{1}{\mu(D(S/4))} \int\limits_{D(S/4)} 1_{N \cap U_q}(g \omega) \intd \mu(g) \intd \haar(\omega)
\]
for every $0 < q < 1$ and it follows for every $0 < q < 1$ that
\begin{equation}
\label{eqn:ballRelation}
\haar(N \cap U_q)
\ge
\int\limits_{h^{-1}[q,1]} \frac{\mu(N \omega^{-1} \cap U_q \omega^{-1} \cap D(S/4))}{\mu(D(S/4))} \intd \haar(\omega)
=
\int\limits_{h^{-1}[q,1]} h(\omega) \intd \haar(\omega)
\ge
q \haar(h^{-1}[q,1])
\end{equation}
because $U_q \omega^{-1} \supset D(S/2)$ whenever $h(\omega) \ge q$.

\begin{claim}
$\displaystyle \haar(N \cap U_q) \le \haar \left( \left\{ \omega \in N : \frac{\mu(N \omega^{-1} \cap D(S))}{\mu(D(S/4))} \ge q \right\} \right)$
\end{claim}
\begin{proof}
If $\omega \in N \cap U_q$ then there is $\eta$ with $h(\eta) \ge q$ and $\omega \in D(S/2) \cdot \eta$.
Fix $a \in D(S/2)$ with $a \cdot \eta = \omega$.
We get
\[
q
\le
\frac{ \mu((N \eta^{-1})a^{-1} \cap D(S/4)a^{-1}) }{ \mu(D(S/4)) }
=
\frac{ \mu(N \omega^{-1} \cap D(S/4)a^{-1}) }{ \mu(D(S/4)) }
\le
\frac{ \mu(N \omega^{-1} \cap D(S)) }{ \mu(D(S/4)) }
\]
since $\mu$ is invariant on both sides and $D(S/4)a^{-1} \subset D(S)$
\end{proof}

The claim, combined with \eqref{eqn:ballRelation}, gives
\begin{align*}
&
\int\limits_N \frac{1}{\mu(D(S/4))} \int\limits_{D(S)} 1_N(g \omega) \intd \mu(g) \intd \haar(\omega)
\\
={}
&
\int\limits_0^\infty \haar \left( \left\{ \omega \in N : \frac{\mu( N \omega^{-1} \cap D(S))}{\mu(D(S/4))} \ge q \right\} \right) \intd q
\\
\ge{}
&
\int\limits_\frac{\haar(N)}{2}^1 q \haar(h^{-1}[q,1]) \intd q
\\
\ge{}
&
\frac{\haar(N)}{2} \int\limits_\frac{\haar(N)}{2}^1 \haar(h^{-1}[q,1]) \intd q
\ge
\frac{\haar(N)^2}{4}
\end{align*}
where the last inequality follows from
\[
\int\limits_\mathcal{H} h \intd \haar
=
\int\limits_0^\frac{\haar(N)}{2} \haar(h^{-1}[q,1]) \intd q + \int\limits_\frac{\haar(N)}{2}^1 \haar(h^{-1}[q,1]) \intd q
\le
\frac{\haar(N)}{2} + \int\limits_\frac{\haar(N)}{2}^1 \haar(h^{-1}[q,1]) \intd q
\]
and \eqref{eqn:measureBadSet}.
This establishes \eqref{eqn:smearBound} as desired.
\end{proof}

The following lemma, another application of Theorem~\ref{thm:nrw}, will allow us to move the action of $\SL(2,\R)$ from $\omega$ to the holonomy vectors in the summation.

\begin{lemma}
\label{lem:ellipseEstimate}
For every $\omega \in K$ we have
\[
\left|
\frac{1}{R^2}
\sum_{v \in \Lambda(\geod^t \omega;R)}
\chi_p(\nbar \hol(v) \nbar)
-
\frac{1}{R^2}
\sum_{v \in \Lambda(\omega;R)}
\chi_p(\nbar \geod^t \hol(v) \nbar)
\right|
\le
\frac{8 \Xi}{ R^{2\kappa}} + 8c \sqrt{\frac{\epsilon}{42 R^{1 + \eta}}}
\]
all $0 \le t \le S$.
\end{lemma}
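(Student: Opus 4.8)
The plan is to push the $\geod^t$-action off the surface and onto the holonomy vectors, after which the two sums turn out to share a summand and differ only in their ranges of summation. Recall that passing from $\omega$ to $\geod^t\omega$ leaves the saddle connections unchanged as curves but transforms holonomy vectors linearly: there is a bijection $\Lambda(\omega) \to \Lambda(\geod^t\omega)$ under which a saddle connection with holonomy $\hol(v)$ on $\omega$ corresponds to one with holonomy $\geod^t\hol(v)$ on $\geod^t\omega$. Reindexing the first sum along this bijection rewrites it as $\sum \chi_p(\nbar \geod^t\hol(v) \nbar)$ taken over those $v \in \Lambda(\omega)$ with $\nbar \geod^t\hol(v) \nbar \le R$, whereas the second sum is the \emph{same} expression $\chi_p(\nbar \geod^t\hol(v) \nbar)$ summed over those $v \in \Lambda(\omega)$ with $\nbar \hol(v) \nbar \le R$. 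Every $v$ lying in both ranges contributes an identical term to each sum, so these cancel exactly; since $|\chi_p| = 1$, the difference of the two sums is bounded in absolute value by the number of $v \in \Lambda(\omega)$ whose holonomy lies in the symmetric difference of the disc $\{w : \nbar w \nbar \le R\}$ and the ellipse $\{w : \nbar \geod^t w \nbar \le R\}$.

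Next I would trap this symmetric difference inside a thin annulus. For any $w \in \R^2$ the diagonal form of $\geod^t$ gives $e^{-t}\nbar w \nbar \le \nbar \geod^t w \nbar \le e^{t}\nbar w \nbar$ when $t \ge 0$. Hence if $w$ lies in the disc but not the ellipse, then $R < \nbar \geod^t w \nbar \le e^{t}\nbar w \nbar$ forces $\nbar w \nbar > Re^{-t}$, while if $w$ lies in the ellipse but not the disc, then $e^{-t}\nbar w \nbar \le \nbar \geod^t w \nbar \le R$ forces $\nbar w \nbar \le Re^{t}$. In either case $Re^{-t} < \nbar w \nbar \le Re^{t}$, so the count above is at most $|\Lambda(\omega; Re^{t})| - |\Lambda(\omega; Re^{-t})|$.

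Finally I would bound this difference using Theorem~\ref{thm:nrw}. Since $\omega \in K$ we have $\xi(\omega) \le \Xi$ by \eqref{eqn:boundOnK}, so $\big| \, |\Lambda(\omega; R')| - cR'^2 \, \big| \le \Xi R'^{2(1-\kappa)}$ for every $R' > 0$. Applying this at $R' = Re^{t}$ and $R' = Re^{-t}$ and dividing by $R^2$ bounds the left-hand side of the lemma by
\[
c(e^{2t} - e^{-2t}) + \frac{\Xi}{R^{2\kappa}}\left(e^{2t(1-\kappa)} + e^{-2t(1-\kappa)}\right).
\]
It then remains to estimate each piece for $0 \le t \le S \le 1$. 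The main term is $2c\sinh(2t) \le 2c\sinh(2S)$, and because $x \mapsto \sinh(x)/x$ is increasing with $\sinh(2)/2 < 2$ one gets $\sinh(2S) \le 4S$, so this term is at most $8cS = 8c\sqrt{\epsilon/(42 R^{1+\eta})}$ by \eqref{eqn:geodTime}. The error term is $2\Xi R^{-2\kappa}\cosh(2t(1-\kappa)) \le 2\Xi R^{-2\kappa}\cosh(2) < 8\Xi R^{-2\kappa}$, and summing yields the stated bound. The conceptual crux is the opening reindexing, which makes the summands agree and produces exact cancellation instead of a lossy term-by-term triangle inequality; the only genuinely delicate point is the final bookkeeping, where the hypothesis $S \le 1$ (guaranteed by \eqref{eqn:radius}) is exactly what converts the hyperbolic factors into the clean constant $8$, and one must keep the asymmetry between $Re^{t}$ and $Re^{-t}$ straight when invoking the counting estimate.
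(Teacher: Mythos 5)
Your proof is correct and follows essentially the same route as the paper's: reindex the sum over $\Lambda(\geod^t\omega;R)$ as a sum over saddle connections of $\omega$ with holonomy in an ellipse, bound the resulting symmetric difference by the annulus count $|\Lambda(\omega;e^tR)| - |\Lambda(\omega;e^{-t}R)|$, and then apply Theorem~\ref{thm:nrw} with the bound \eqref{eqn:boundOnK} together with the hyperbolic estimates valid for $S \le 1$ and \eqref{eqn:geodTime}. The constants work out exactly as in the paper, so nothing further is needed.
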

\begin{proof}
Fix $R > 0$ and $\omega \in K$ and $t \ge 0$.
First note that $v$ belongs to $\Lambda(\geod^t \omega;R)$ if and only if $\geod^{-t} \hol(v)$ belongs to the ellipse of width $2e^{-t} R$ and height $2e^t R$.
Thus
\[
\sum_{v \in \Lambda(\geod^t \omega;R)}
\chi_p(\nbar \hol(v) \nbar)
=
\sum_{v \in \Lambda(\omega;e^{-t} R, e^t R)}
\chi_p(\nbar \geod^t \hol(v) \nbar)
\]
holds for all $R > 0$.
Next we estimate $|\Lambda(\omega; e^t R,e^{-t} R) \symdiff \Lambda(\omega;R)|$.
For every $\omega$ we have
\begin{align*}
&
|\Lambda(\omega; e^t R,e^{-t} R) \symdiff \Lambda(\omega;R)|
\\
\le{}
&
|\Lambda(\omega;e^t R)| - |\Lambda(\omega;e^{-t} R)|
\\
\le{}
&
(e^t R)^{2(1 - \kappa)} \Xi + c (e^t R)^2 + (e^{-t} R)^{2(1 - \kappa)} \Xi - c (e^{-t} R)^2 
\\
\le{}
&
2 \Xi \cosh(2t(1 - \kappa)) R^{2(1 - \kappa)} + 2 c \sinh(2t) R^2
\end{align*}
for all $t \ge 0$ using \eqref{eqn:boundOnK}, as we may since $\omega \in K$.
It follows from the above that
\[
\left|
\sum_{v \in \Lambda(\geod^t \omega;R)}
\chi_p(\nbar \hol(v) \nbar)
-
\sum_{v \in \Lambda(\omega;R)}
\chi_p(\nbar \geod^t \hol(v) \nbar)
\right|
\le
8 \Xi R^{2(1 - \kappa)} + 8 c S R^2
\]
for all $0 \le t \le S$.
The conclusion follows from \eqref{eqn:geodTime}.
\end{proof}

\begin{corollary}
\label{cor:ellipseEstimate}
For every $\omega \in K$ we have
\begin{equation}
\label{eqn:movingAction}
\left|
\frac{1}{R^2}
\sum_{v \in \Lambda(\geod^t \comp^\theta \omega;R)}
\chi_p(\nbar \hol(v) \nbar)
-
\frac{1}{R^2}
\sum_{v \in \Lambda(\omega;R)}
\chi_p(\nbar \geod^t \comp^\theta \hol(v) \nbar)
\right|
\le
\frac{8 \Xi}{ R^{2\kappa}} + 8c \sqrt{\frac{\epsilon}{42 R^{1 + \eta}}}
\end{equation}
for all $0 \le \theta < 2\pi$ and all $0 \le t \le S$.
\end{corollary}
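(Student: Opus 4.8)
The plan is to reduce everything to Lemma~\ref{lem:ellipseEstimate} by treating the rotation $\comp^\theta$ as a length-preserving modification of the surface carried out before the geodesic-flow estimate is applied. The structural fact I would lean on is that $\comp^\theta$ is a Euclidean isometry: passing from $\omega$ to $\comp^\theta\omega$ rotates every holonomy vector by $\theta$ while preserving its length, and because the disc of radius $R$ is rotationally symmetric this sets up a bijection between $\Lambda(\omega;R)$ and $\Lambda(\comp^\theta\omega;R)$. Concretely, the saddle connection $w \in \Lambda(\comp^\theta\omega;R)$ corresponding to $v \in \Lambda(\omega;R)$ satisfies $\hol(w) = \comp^\theta \hol(v)$, and in particular $\nbar \hol(w) \nbar = \nbar \hol(v) \nbar$.

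First I would record that the two quantities defining $K$ are rotation invariant. Since rotations preserve Euclidean length we have $\ell(\comp^\theta\omega) = \ell(\omega)$, and since the radius-$R$ disc is rotation invariant the bijection above gives $|\Lambda(\comp^\theta\omega;R)| = |\Lambda(\omega;R)|$ for every $R$, whence $\xi(\comp^\theta\omega) = \xi(\omega)$. Consequently $\xi(\comp^\theta\omega) = \xi(\omega) \le \Xi$ for every $\omega \in K$ and every $\theta$. This is the crucial point, because the only use of the hypothesis $\omega \in K$ in the proof of Lemma~\ref{lem:ellipseEstimate} is the bound $\xi(\omega) \le \Xi$ supplied by \eqref{eqn:boundOnK}; the opening geometric identity relating $\Lambda(\geod^t\omega;R)$ to the ellipse holds for every surface.

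Next I would apply Lemma~\ref{lem:ellipseEstimate} verbatim with $\comp^\theta\omega$ in place of $\omega$, which the previous paragraph licenses, to obtain
\[
\left|
\frac{1}{R^2} \sum_{v \in \Lambda(\geod^t \comp^\theta\omega;R)} \chi_p(\nbar \hol(v) \nbar)
-
\frac{1}{R^2} \sum_{w \in \Lambda(\comp^\theta\omega;R)} \chi_p(\nbar \geod^t \hol(w) \nbar)
\right|
\le
\frac{8\Xi}{R^{2\kappa}} + 8c\sqrt{\frac{\epsilon}{42 R^{1+\eta}}}
\]
for all $0 \le t \le S$. It then remains only to rewrite the second sum using the rotation bijection: reindexing by the corresponding elements of $\Lambda(\omega;R)$ and substituting $\hol(w) = \comp^\theta\hol(v)$ yields
\[
\frac{1}{R^2} \sum_{w \in \Lambda(\comp^\theta\omega;R)} \chi_p(\nbar \geod^t \hol(w) \nbar)
=
\frac{1}{R^2} \sum_{v \in \Lambda(\omega;R)} \chi_p(\nbar \geod^t \comp^\theta \hol(v) \nbar),
\]
which is precisely the second term in \eqref{eqn:movingAction}. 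Substituting this identity into the displayed inequality gives \eqref{eqn:movingAction}. Almost all of the analytic work is already packaged inside Lemma~\ref{lem:ellipseEstimate}, so the step I would be most careful about is the rotation invariance of $\xi$ (equivalently of the counting function $|\Lambda(\,\cdot\,;R)|$): this is the only genuinely new ingredient, and it is what allows the earlier lemma to be applied at the rotated surface $\comp^\theta\omega$ rather than merely at points of $K$.
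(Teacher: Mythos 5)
Your proof is correct and follows essentially the same route as the paper: apply Lemma~\ref{lem:ellipseEstimate} with $\comp^\theta\omega$ in place of $\omega$, then identify the resulting sum over $\Lambda(\comp^\theta\omega;R)$ with the sum over $\Lambda(\omega;R)$ via the rotation bijection on holonomy vectors. Your explicit verification that $\xi$ is rotation invariant---so that the bound \eqref{eqn:boundOnK} still applies even though $\comp^\theta\omega$ need not lie in $K$---is a point the paper's one-line proof leaves implicit, and it is exactly the right thing to check.
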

\begin{proof}
Apply the lemma with $\comp^\theta \omega$ in place of $\omega$.
Then note that when $t = 0$ the two sums are equal.
\end{proof}

\subsection{A linear approximation}

In this section we estimate the right-hand side of \eqref{eqn:orbitSmear}, proving the following result.

\begin{theorem}
\label{thm:mainEstimate}
There is $0 < \gamma < 1$ such that
\begin{equation}
\label{eqn:wantBound}
\int\limits_K
\frac{1}{2 \pi} \int\limits_0^{2\pi}
\frac{1}{S} \int\limits_0^S
\frac{1}{R^4}
\sum_{v \in \Lambda(\omega;R)}
\sum_{w \in \Lambda(\omega;R)}
\chi_p(\nbar \geod^t \comp^\theta \hol( v ) \nbar) \, \overline{\chi_p(\nbar \geod^t \comp^\theta \hol( w )\nbar)}
\intd t \intd \theta \intd \haar(\omega)
=
\bigo \left( \frac{1}{R^\gamma} \right)
\end{equation}
for almost all $\omega \in K$.
\end{theorem}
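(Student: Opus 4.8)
The plan is to split the double sum in \eqref{eqn:wantBound} into its diagonal part $v = w$ and its off-diagonal part $v \ne w$. Write $f_v(t,\theta) = \nbar \geod^t \comp^\theta \hol(v) \nbar$ and $\rho_v = \nbar \hol(v) \nbar$, so that the summand is $\exp(2\pi i p(f_v(t,\theta) - f_w(t,\theta)))$. The diagonal contributes exactly $|\Lambda(\omega;R)|$ to the inner double sum, and since $\omega \in K$ the bound \eqref{eqn:boundOnK} gives $|\Lambda(\omega;R)| \le cR^2 + \Xi R^{2(1-\kappa)}$; after dividing by $R^4$ and integrating over the finite-measure set $K$ this part is $\bigo(R^{-2})$. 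All the work is therefore in exhibiting cancellation in the off-diagonal terms, and the mechanism for that cancellation is the short geodesic flow: it changes each length $f_v$ at a rate depending on the direction of $\hol(v)$, and for most pairs and most rotation angles these rates differ enough to oscillate the phase.

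Writing $\hol(v) = \rho_v(\cos\phi_v, \sin\phi_v)$ one computes $f_v(t,\theta) = \rho_v\sqrt{e^{2t}\cos^2(\phi_v + \theta) + e^{-2t}\sin^2(\phi_v + \theta)}$, whence $f_v(0,\theta) = \rho_v$ and $\partial_t f_v(0,\theta) = g_v(\theta) := \rho_v\cos(2(\phi_v + \theta))$. The first step is to replace $f_v(t,\theta)$ by the linear approximation $\rho_v + t\,g_v(\theta)$. Since $|\partial_t^2 f_v| \le CR$ for $0 \le t \le S \le 1$, Taylor's theorem bounds the error by $\tfrac12 CRt^2 \le \tfrac12 CRS^2$, which by \eqref{eqn:geodTime} is $\bigo(R^{-\eta})$; the inequalities \eqref{eqn:delta} and \eqref{eqn:radius} are precisely the calibration that keeps the induced phase error $2\pi p\,|f_v - \rho_v - t g_v|$ under control. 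The multiplicative error in each summand is thus $\bigo(pR^{-\eta})$ uniformly in $v,w,t,\theta$, so after summation over the at most $R^4$ pairs and division by $R^4$ the linearization costs only $\bigo(pR^{-\eta})$. It therefore suffices to estimate the same average with $f_v$ replaced by $\rho_v + t g_v(\theta)$.

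For the linearized average the variable $t$ enters only through $g_v(\theta) - g_w(\theta)$, and the inner $t$-integral is elementary:
\[
\left| \frac{1}{S} \int\limits_0^S \exp\!\big( 2\pi i p\,((\rho_v - \rho_w) + t(g_v(\theta) - g_w(\theta))) \big) \intd t \right| \le \min\!\left( 1,\, \frac{1}{\pi p S\,|g_v(\theta) - g_w(\theta)|} \right).
\]
Now $g_v(\theta) - g_w(\theta) = C_{v,w}\cos(2\theta + \beta_{v,w})$ is a single sinusoid in $\theta$ whose amplitude $C_{v,w} = |\Psi(v) - \Psi(w)|$ is the distance between the doubled-angle vectors $\Psi(v) = \rho_v(\cos 2\phi_v, \sin 2\phi_v)$; here \eqref{eqn:delta} serves to show that on the small-frequency set $|pS(g_v - g_w)| < \delta$ the integral has essentially no cancellation and so may be treated trivially. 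Integrating the displayed minimum over $\theta$ and using the elementary estimate $\tfrac{1}{2\pi}\int_0^{2\pi}\min(1, a/|\cos u|)\intd u = \bigo(a\log(2 + a^{-1}))$ produces the per-pair bound $\bigo\!\big(\log(2 + pSC_{v,w})/(1 + pSC_{v,w})\big)$, so that the oscillation of every off-diagonal term is governed by the single quantity $pSC_{v,w}$.

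The final step, which I expect to be the main obstacle, is to sum this per-pair bound over all $\bigo(R^4)$ pairs and show that division by $R^4$ still leaves a negative power of $R$; the threat is a concentration of pairs with small amplitude $C_{v,w}$, that is, nearly parallel saddle connections of nearly equal length. Rather than prove a delicate pair-correlation estimate I would use the crude inequality $C_{v,w} \ge |\rho_v - \rho_w|$ together with $\log(2 + pSC_{v,w}) = \bigo(\log R)$ to reduce matters to $\tfrac{\log R}{R^4}\sum_{v,w}(1 + pS|\rho_v - \rho_w|)^{-1}$, which depends only on the lengths. On $K$ the counting estimate \eqref{eqn:nrw} controls every annulus, $|\{w : |\rho_w - \rho_v| \le \Delta\}| \le 4cR\Delta + \bigo(\Xi R^{2(1-\kappa)})$, so a layer-cake integration gives $\sum_w (1 + pS|\rho_v - \rho_w|)^{-1} = \bigo(R\log R / (pS) + \Xi R^{2(1-\kappa)})$. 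Summing over the $\bigo(R^2)$ choices of $v$, dividing by $R^4$, and inserting $S \asymp R^{-(1+\eta)/2}$ from \eqref{eqn:geodTime} leaves $\bigo(R^{-(1-\eta)/2}\log^2 R) + \bigo(\Xi R^{-2\kappa}\log R)$. Collecting this with the diagonal $\bigo(R^{-2})$ and the linearization cost $\bigo(pR^{-\eta})$, the whole integral is $\bigo(R^{-\gamma})$ for any $\gamma$ with $0 < \gamma < \min\{\eta, (1-\eta)/2, 2\kappa\}$, a nonempty range because $0 < \eta < 1$ and $\kappa > 0$, which proves \eqref{eqn:wantBound}.
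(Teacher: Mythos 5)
Your proposal is correct and follows essentially the same route as the paper's proof: you linearize the geodesic flow by Taylor's theorem (the paper's Lemma~\ref{lem:flowLengthEstimate}), identify the difference of the $t$-derivatives as a single sinusoid in $2\theta$ whose amplitude dominates $\big| \nbar \hol(v) \nbar - \nbar \hol(w) \nbar \big|$ and estimate the resulting oscillatory integral up to logarithmic factors (Lemma~\ref{lem:radialFriends}), and control pairs of nearly equal length with the annulus count coming from Theorem~\ref{thm:nrw} (Lemma~\ref{lem:unfriendlyCount}). The only real difference is bookkeeping in the final summation: where the paper dichotomizes pairs at a fixed threshold $R^\rho$ and treats the two classes separately, you keep a uniform per-pair bound $\bigo\big(\log(2+pSC_{v,w})/(1+pSC_{v,w})\big)$ and sum over all pairs by a dyadic layer-cake argument, which produces an exponent of the same quality.
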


For the proof of Theorem~\ref{thm:mainEstimate} we will need several lemmas.
Write
\[
\alpha(u) = \frac{u_1^2 - u_2^2}{\nbar u \nbar}
\qquad
\qquad
\beta(u) = \frac{2 u_1 u_2}{\nbar u \nbar}
\]
for any $u = (v_1,v_2)$ in $\R^2$.
Certainly $|\alpha(u)| \le \nbar u \nbar$ and $|\beta(u)| \le \nbar u \nbar$ for all $u \in \R^2$.

\begin{lemma}
\label{lem:flowLengthEstimate}
We have
\begin{equation}
\label{eqn:flowLengthEstimate}
\Big| \nbar u \nbar + \alpha(u) t - \nbar \geod^t u \nbar \Big| < \frac{\epsilon}{R^\eta}
\end{equation}
whenever $0 \le t \le S$ and $u \in \R^2$ satisfies $\nbar u \nbar \le R$.
\end{lemma}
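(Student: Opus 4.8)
The plan is to recognize the left-hand side of \eqref{eqn:flowLengthEstimate} as the second-order Taylor error of the smooth function $t \mapsto \nbar \geod^t u \nbar$ expanded about $t = 0$. Writing $u = (u_1,u_2)$ and $f(t) = \nbar \geod^t u \nbar = \sqrt{e^{2t} u_1^2 + e^{-2t} u_2^2}$, I would first record that $f(0) = \nbar u \nbar$ and that $f'(0) = (u_1^2 - u_2^2)/\nbar u \nbar = \alpha(u)$, so that $\nbar u \nbar + \alpha(u) t$ is exactly the linear Taylor polynomial of $f$ at the origin. (The case $u = 0$ is trivial, so I assume $u \neq 0$.) By Taylor's theorem with Lagrange remainder, the quantity to be bounded then equals $\tfrac{1}{2} |f''(\xi)|\, t^2$ for some $\xi \in [0,t]$.

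The key step is a uniform bound on $f''$. Setting $g(t) = e^{2t} u_1^2 + e^{-2t} u_2^2 = f(t)^2$, one computes $g''(t) = 4 g(t)$ and $|g'(t)| \le 2 g(t)$, so that $(g')^2 \le 4 g^2$. Differentiating $f = g^{1/2}$ twice gives $f'' = 2 g^{1/2} - (g')^2 /(4 g^{3/2})$, and these two elementary inequalities yield the clean sandwich $0 \le f(t) \le f''(t) \le 2 f(t)$. In particular $|f''(\xi)| \le 2 f(\xi)$ for every $\xi$.

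It then remains to bound $f$ on $[0,S]$. Since $S \le 1$ by \eqref{eqn:radius} and $\nbar u \nbar \le R$ by hypothesis, for every $\xi \in [0,S]$ we have $f(\xi) = \sqrt{e^{2\xi} u_1^2 + e^{-2\xi} u_2^2} \le e^\xi \sqrt{u_1^2 + u_2^2} = e^\xi \nbar u \nbar \le e R$, using $e^{-2\xi} \le e^{2\xi}$ for $\xi \ge 0$. Combining the three displays, the Taylor error is at most $\tfrac{1}{2} (2 e R) S^2 = e R S^2$. Finally I substitute the defining relation \eqref{eqn:geodTime}, namely $S^2 = \epsilon /(42 R^{1+\eta})$, to obtain $e R S^2 = e \epsilon /(42 R^\eta) < \epsilon / R^\eta$ since $e < 42$, which is exactly the claimed inequality.

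I expect no genuine obstacle here: the whole argument is a one-variable Taylor estimate, and the only mildly delicate point is the uniform second-derivative bound, which reduces to the identity $g'' = 4g$ together with $|g'| \le 2g$. The constant $42$ appearing in \eqref{eqn:geodTime} is evidently chosen precisely to absorb the factor $e$ with room to spare, so the final numerical comparison is automatic.
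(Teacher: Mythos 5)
Your proof is correct and follows essentially the same route as the paper: both expand $t \mapsto \nbar \geod^t u \nbar$ to first order at $t = 0$, bound the second derivative $f''$, and conclude via Taylor's theorem with Lagrange remainder together with the relation \eqref{eqn:geodTime}. The only difference is your sandwich $f \le f'' \le 2f$ (obtained from $g'' = 4g$ and $|g'| \le 2g$), which is tighter than the paper's bound $f'' \le (e^t + 4e^{3t}) \nbar u \nbar$; as a result the paper needs the constant $42$ almost exactly (since $(e + 4e^3)/2 \approx 41.5$), whereas your estimate only needs $e < 42$ and so leaves a large margin.
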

\begin{proof}
Fix $u \in \R^2$ with $\nbar u \nbar \le R$.
Writing $f_u(t) = \nbar \geod^t u \nbar$ whenever $0 \le t \le S$ note that
\begin{equation}
\label{eqn:flowLengthSize}
e^{-t} \nbar u \nbar \le f_u(t) \le e^t \nbar u \nbar
\end{equation}
and
\[
f_u''(t) = 2 f_u(t) - \frac{(e^{2t} u_1^2 - e^{-2t} u_2^2)^2}{f_u(t)^3} = f_u(t) + \frac{4 u_1^2 u_2^2}{f_u(t)^3}
\]
for all $t$.
Fix $\epsilon > 0$.
From \eqref{eqn:geodTime} and \eqref{eqn:flowLengthSize} we have
\[
\left| \frac{f''_u(t)}{2} t^2 \right|
\le
\frac{t^2}{2} ( e^t + 4 e^{3t} ) \nbar u \nbar
\le
42 t^2 \nbar u \nbar
\le
42 S^2 R
=
\frac{\epsilon}{R^\eta}
\]
for all $0 < t < S$.
Therefore \eqref{eqn:flowLengthEstimate} follows from the Lagrange form of the remainder in Taylor's theorem.
\end{proof}

We now focus on
\begin{equation}
\label{eqn:afterEllipse}
\frac{1}{2 \pi} \int\limits_0^{2\pi}
\frac{1}{S} \int\limits_0^S
\chi_p(\nbar \comp^\theta \hol(v) \nbar + t \alpha(\comp^\theta \hol(v)))
\,
\overline{\chi_p (\nbar \comp^\theta \hol(w) \nbar + t \alpha(\comp^\theta \hol(w)))}
\intd t \intd \theta
\end{equation}
for fixed $v,w \in \Lambda(\omega;R)$.
Note that
\begin{equation}
\label{eqn:absoluteValue}
|\eqref{eqn:afterEllipse}|
=
\left| \frac{1}{2\pi} \int\limits_0^{2\pi} \frac{1}{S} \int\limits_0^S \chi_p(t \alpha(\comp^\theta \hol(v)))
\,
\overline{\chi_p(t \alpha(\comp^\theta \hol(w)))} \intd t \intd \theta \right|
\end{equation}
because $\nbar \cdot \nbar$ is $\comp$ invariant.

\begin{lemma}
\label{lem:radialFriends}
If $v,w \in \Lambda(\omega;R)$ satisfy
\begin{equation}
\label{eqn:radialFriends}
\Big| \nbar \hol(v) \nbar - \nbar \hol(w) \nbar \Big|
\ge
R^\rho
\end{equation}
then
\[
| \eqref{eqn:afterEllipse} |
\le
4 \cdot \frac{1 + \epsilon}{2 \pi R}
+
\frac{4}{\pi^2} \sqrt{\frac{42 R^\eta}{\epsilon}} \frac{1}{R^\rho} \left( \log R + \log \frac{\pi}{4} \right)
\]
holds.
\end{lemma}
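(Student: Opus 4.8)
My starting point is the reduction already recorded in \eqref{eqn:absoluteValue}: since the Euclidean norm is invariant under each rotation $\comp^\theta$, the radial factors $\nbar \comp^\theta \hol(v) \nbar$ and $\nbar \comp^\theta \hol(w) \nbar$ are constant in $(\theta,t)$ and contribute only a factor of modulus one, so it is enough to estimate the average with only the linear terms in the exponent. The first move is to make the angular dependence explicit. Writing $\hol(v)$ in polar form with argument $\phi_v$ gives $\alpha(\comp^\theta \hol(v)) = \nbar \hol(v) \nbar \cos 2(\phi_v + \theta)$, and similarly for $w$, so the phase occurring in \eqref{eqn:absoluteValue} is $t\,\Delta(\theta)$ with
\[
\Delta(\theta) = \nbar \hol(v) \nbar \cos 2(\phi_v + \theta) - \nbar \hol(w) \nbar \cos 2(\phi_w + \theta) = L \cos(2\theta + \xi)
\]
for some phase $\xi$ and amplitude $L$. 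By the elementary formula for the amplitude of a difference of two equal-frequency cosines, together with \eqref{eqn:radialFriends}, we have $R^\rho \le \big|\, \nbar \hol(v) \nbar - \nbar \hol(w) \nbar \,\big| \le L \le \nbar \hol(v) \nbar + \nbar \hol(w) \nbar \le 2R$; the lower bound $L \ge R^\rho$ is the one piece of information we extract from the radial-separation hypothesis, and it is the engine of the whole estimate.

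I would then evaluate the inner $t$-average exactly as $\frac{\exp(2\pi i p S \Delta(\theta)) - 1}{2\pi i p S \Delta(\theta)}$ and record its two complementary bounds: in modulus it never exceeds $\frac{1}{\pi p S |\Delta(\theta)|}$, from $|e^{is} - 1| \le 2$; and it lies within $\epsilon$ of $1$, hence has modulus at most $1 + \epsilon$, whenever $|p S \Delta(\theta)| < \delta$, by \eqref{eqn:delta}. Accordingly I would split $[0,2\pi]$ at a threshold $|\cos(2\theta + \xi)| \approx 1/R$ into the neighbourhoods of the zeros of $\Delta$, where the first bound is useless but $\Delta$ is small, and the complementary good set, where the $t$-average genuinely decays.

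On the near-zero set the condition $|\cos(2\theta + \xi)| \le 1/R$ makes its $\theta$-measure of order $1/R$ and forces $|p S \Delta(\theta)| \le p S \cdot 2R \cdot (1/R) = 2 p S < \delta$, the inequality $2 p S < \delta$ being exactly what the first entry in the maximum of \eqref{eqn:radius} guarantees (it yields $\delta \ge \sqrt{5}\, p S$); applying \eqref{eqn:delta} there bounds the integrand by $1 + \epsilon$ and produces the first term $4 \cdot \frac{1+\epsilon}{2\pi R}$. On the good set I would instead use $|\Delta(\theta)| = L |\cos(2\theta + \xi)| \ge R^\rho |\cos(2\theta + \xi)|$, discard the phase $\xi$ by integrating over full periods, and recognise what remains as a multiple of $\int \sec$, whose value grows only logarithmically in $R$. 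Combined with the value of $S$ fixed in \eqref{eqn:geodTime} and the factor $R^{-\rho}$, this yields the second term.

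The only genuinely delicate point is the contribution near the zeros of $\Delta(\theta)$, where averaging over $t$ buys no decay whatsoever. Everything hinges on the amplitude bound $L \ge R^\rho$: it forces $\Delta$ through each of its zeros steeply enough that the bad neighbourhoods have measure only $O(1/R)$ and that the secant integral over the good set converges with no worse than a logarithmic loss. The calibration \eqref{eqn:radius} of $R$ against $\delta$ then legitimises the use of \eqref{eqn:delta} on those neighbourhoods, after which only routine tracking of constants remains.
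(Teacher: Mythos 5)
Your proposal is correct and follows essentially the same route as the paper's proof: the same reduction via \eqref{eqn:absoluteValue}, exact evaluation of the $t$-average, rewriting the phase difference as a single sinusoid whose amplitude is bounded below by $\big| \nbar \hol(v) \nbar - \nbar \hol(w) \nbar \big| \ge R^\rho$, and the same split of the $\theta$-integral into $\bigo(1/R)$-measure neighbourhoods of the zeros (handled by \eqref{eqn:delta} together with \eqref{eqn:radius}) and a complement where the $1/(S|\Delta|)$ bound produces the logarithmic integral. The only cosmetic difference is that the paper first rotates so that $\hol(w)$ is horizontal and then computes the amplitude $A(v,w)$ explicitly, whereas you invoke the equal-frequency sinusoid amplitude formula directly.
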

\begin{proof}
In absolute value \eqref{eqn:afterEllipse} is equal to
\[
\left| \frac{1}{2\pi} \int\limits_0^{2\pi} \frac{\exp(2\pi i S p ( \alpha(\comp^\theta \hol(v)) - \alpha(\comp^\theta \hol(w)) )) - 1}{2\pi i S p ( \alpha(\comp^\theta \hol(v)) - \alpha(\comp^\theta \hol(w)) )} \intd \theta \right|
\]
by using \eqref{eqn:absoluteValue} then integrating over $t$.
First, by making a substitution, we can assume $\hol(w)$ is horizontal.
We have $\beta(\hol(w)) = 0$ and $\alpha(\hol(w)) = \nbar \hol(w) \nbar$.
So
\begin{align*}
&
\alpha(\comp^\theta \hol(v)) - \alpha(\comp^\theta \hol(w))
\\
={}
&
\Big( \alpha(\hol(v)) \cos(2\theta) - \beta(\hol(v)) \sin(2\theta) \Big)
-
\Big( \alpha(\hol(w)) \cos(2\theta) - \beta(\hol(w)) \sin(2\theta) \Big)
\\
={}
&
\Big( \alpha(\hol(v)) - \nbar \hol(w) \nbar \Big) \cos(2\theta) - \beta(\hol(v)) \sin(2\theta)
\\
={}
&
A(v,w) \sin(\phi - 2 \theta)
\end{align*}
where $A(v,w)^2 = \left( \alpha(\hol(v)) - \nbar \hol(w) \nbar \right)^2 + \beta(\hol(v))^2$ and $\phi$ is chosen appropriately using the angle addition formula.
We are therefore interested in
\[
\left|
\frac{1}{2\pi} \int\limits_0^{2\pi} \frac{\exp(2\pi i S p A(v,w) \sin(\phi - 2\theta) ) - 1}{2\pi i S p A(v,w) \sin(\phi - 2\theta)} \intd \theta
\right|
\]
and consider separately the integral over small intervals centered at zeros of $\sin(\phi - 2\theta)$ and what remains.
Precisely, if $I$ is an interval of radius $1/R$ centered at a zero of $\sin(\phi - 2\theta)$ then
\[
|S p A(v,w) \sin(\phi - 2\theta)|
\le
p \sqrt{\frac{5\epsilon}{42}} R^{1 - (1+\eta)/2} |\sin(\phi - 2\theta)|
\le
p \sqrt{\frac{5\epsilon}{42}} \frac{1}{R^{(1+\eta)/2}}
<
\delta
\]
for all $\theta \in I$ by \eqref{eqn:radius}, giving
\[
\left| \frac{\exp(2\pi i S p A(v,w) \sin(\phi - 2\theta) ) - 1}{2\pi i S p A(v,w) \sin(\phi - 2\theta)} - 1 \right| \le \epsilon
\]
for every $\theta \in I$ by definition of $\delta$ (see \eqref{eqn:delta}).
There being four such intervals, we can estimate
\begin{equation}
\label{eqn:mainEstimate}
\begin{aligned}
&
\left| \frac{1}{2\pi} \int\limits_0^{2\pi} \frac{\exp(2\pi i S p A(v,w) \sin(\phi - 2\theta) ) - 1}{2\pi i S p A(v,w) \sin(\phi - 2\theta)} \intd \theta \right|
\\
\le{}
&
4 \cdot \frac{1+ \epsilon}{2 \pi R}
+
\frac{4}{2\pi} \int\limits_{\frac{\phi}{2}+\frac{1}{R}}^{\frac{\phi}{2} + \frac{\pi}{2} - \frac{1}{R}} \frac{2}{2 \pi S A(v,w) |\sin(\phi - 2\theta)|} \intd \theta
\end{aligned}
\end{equation}
by trivially estimating the numerator on the complement of the four intervals.
So (after substituting away $\phi$) we wish to make
\begin{equation}
\label{eqn:estimate}
\frac{1}{\pi S A(v,w)} \frac{4}{\pi} \int\limits_{\frac{1}{R}}^{\frac{\pi}{4}} \frac{1}{\sin(2\theta)} \intd \theta
\end{equation}
small.
Now $\frac{4}{\pi} \theta \le \sin(2 \theta)$ on $[0,\frac{\pi}{4}]$ so
\[
\frac{4}{\pi} \int\limits_{\frac{1}{R}}^{\frac{\pi}{4}} \frac{1}{\sin(2\theta)} \intd \theta
\le
\int\limits_{\frac{1}{R}}^{\frac{\pi}{4}} \frac{1}{\theta} \intd \theta
=
\log \frac{\pi}{4} + \log R
\]
holds.
With \eqref{eqn:geodTime} the quantity \eqref{eqn:estimate} becomes
\[
\frac{4}{\pi^2} \sqrt{\frac{42 R^{1+\eta}}{\epsilon}} \frac{1}{A(v,w)} \left( \log R + \log \frac{\pi}{4} \right)
\]
so we wish to determine when $A(v,w)$ not too small.
So we estimate the size of $A(v,w)$ recalling that $\hol(w)$ is horizontal.
Let $\theta_{v,w}$ be the angle between $\hol(v)$ and $\hol(w)$ so that $\hol(v) = \nbar \hol(v) \nbar (\cos \theta_{v,w},\sin \theta_{v,w})$.
Then
\begin{align*}
A(v,w)^2
&
=
(\nbar \hol(v) \nbar \cos (2\theta_{v,w}) - \nbar \hol(w) \nbar )^2 + ( \nbar \hol(v) \nbar \sin (2 \theta_{v,w}) )^2
\\
&
=
\nbar \hol(v) \nbar^2 + \nbar \hol(w) \nbar^2 - 2 \nbar \hol(v) \nbar \nbar \hol(w) \nbar \cos(2 \theta_{v,w})
\\
&
\ge
(\nbar \hol(v) \nbar - \nbar \hol(w) \nbar)^2
\end{align*}
and \eqref{eqn:radialFriends} concludes the proof.
\end{proof}

\begin{lemma}
\label{lem:unfriendlyCount}
Fix $\omega \in K$.
We have
\[
\left| \left\{ u \in \Lambda(\omega;R) : \Big| \nbar \hol(v) \nbar - \nbar \hol(u) \nbar \Big| \le R^\rho \right\} \right|
\le
(4c + 8\Xi) R^{1 + \rho}
\]
for every $v \in \Lambda(\omega;R)$.
\end{lemma}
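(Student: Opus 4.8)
The plan is to read the left-hand side as a count of holonomy vectors lying in a thin annulus and to estimate that count by a difference of two disc counts, each controlled by Theorem~\ref{thm:nrw}. Set $L = \nbar \hol(v) \nbar$; since $v \in \Lambda(\omega;R)$ we have $L \le R$. Every $u$ counted on the left satisfies $\nbar \hol(u) \nbar \in [L - R^\rho, L + R^\rho]$, so the count is governed by the number of holonomy vectors in this annulus, which I bound by the difference of disc counts
\[
|\Lambda(\omega; L + R^\rho)| - |\Lambda(\omega; L - R^\rho)|
\]
in the main case $L \ge R^\rho$, and by $|\Lambda(\omega; L + R^\rho)|$ alone in the degenerate case $L < R^\rho$ (boundary contributions at the radii being lower order and absorbed).

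The second step applies the uniform counting bound that is available precisely because $\omega \in K$. By \eqref{eqn:boundOnK} we have $\xi(\omega) \le \Xi$, that is,
\[
\big| |\Lambda(\omega; R')| - c R'^2 \big| \le \Xi R'^{2(1-\kappa)}
\]
for all $R' > 0$. Applying the upper estimate at the outer radius and the lower estimate at the inner radius, the annular count is at most
\[
c\big[(L + R^\rho)^2 - (L - R^\rho)^2\big] + \Xi\big[(L + R^\rho)^{2(1-\kappa)} + (L - R^\rho)^{2(1-\kappa)}\big].
\]
The main term telescopes to $4 c L R^\rho \le 4 c R^{1+\rho}$ using $L \le R$. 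Since $\rho < 1$ forces $R^\rho \le R$, each base $L \pm R^\rho$ is at most $2R$, so the error term is at most $2\Xi (2R)^{2(1-\kappa)} \le 8 \Xi R^{2(1-\kappa)}$.

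The final step is the exponent comparison, which is exactly where the constraint on $\rho$ is consumed. Because $\rho$ was chosen with $\rho > 1 - 2\kappa$, we have $2(1-\kappa) < 1 + \rho$, and hence $R^{2(1-\kappa)} \le R^{1+\rho}$ for $R \ge 1$. This upgrades the error bound to $8\Xi R^{1+\rho}$, and together with the main term it yields the claimed $(4c + 8\Xi)R^{1+\rho}$. The degenerate case $L < R^\rho$ is dispatched the same way: there $L + R^\rho \le 2R^\rho$, so the disc count is at most $4cR^{2\rho} + 4\Xi R^{2\rho}$, and $R^{2\rho} \le R^{1+\rho}$ since $\rho \le 1$.

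I expect the only genuine point to be the exponent inequality $2(1-\kappa) \le 1 + \rho$: this is the whole reason $\rho$ was taken strictly larger than $1 - 2\kappa$, for without it the Theorem~\ref{thm:nrw} error term would overwhelm the telescoping main term rather than being swallowed by it. Everything else—the reduction to an annulus, the telescoping to $4cLR^\rho$, and the crude bounds $L \pm R^\rho \le 2R$ on the error—is routine bookkeeping.
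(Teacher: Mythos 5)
Your proof is correct and follows essentially the same route as the paper: read the count as an annulus count, bound it by a difference of disc counts using the bound $\xi(\omega)\le\Xi$ from \eqref{eqn:boundOnK}, telescope the main term to $4cR^{1+\rho}$, and absorb the $8\Xi R^{2(1-\kappa)}$ error via $\rho > 1-2\kappa$. Your explicit treatment of the degenerate case $\nbar \hol(v) \nbar < R^\rho$ and of the exponent inequality $2(1-\kappa)\le 1+\rho$ merely spells out what the paper compresses into ``by choice of $\rho$.''
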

\begin{proof}
The cardinality of the set is the same as the number of saddle connections of $\omega$ in the annulus $\sphere^1 \times [\nbar \hol(v) \nbar - R^\rho,\nbar \hol(v) \nbar + R^\rho]$.
By Theorem~\ref{thm:nrw} this cardinality is at most
\begin{align*}
&
|c(\nbar \hol(v) \nbar + R^\rho)^2 - c(\nbar \hol(v) \nbar - R^\rho)^2 + \Xi(\nbar \hol(v) \nbar + R^\rho)^{2(1 - \kappa)} + \Xi(\nbar \hol(v) \nbar - R^\rho)^{2(1 - \kappa)}|
\\
\le{}
&
4 c \nbar \hol(v) \nbar R^\rho
+
2 \Xi ( \nbar \hol(v) \nbar + R^\rho )^{2(1 - \kappa)}
\\
\le{}
&
4 c R^{1+\rho} + 8 \Xi R^{2(1 - \kappa)}
\end{align*}
by choice of $\rho$.
\end{proof}

We can now give the proof of Theorem~\ref{thm:mainEstimate}.

\begin{proof}
[Proof of Theorem~\ref{thm:mainEstimate}]
It suffices to prove that
\[
\int\limits_K
\frac{1}{R^4}
\sum_{v \in \Lambda(\omega;R)}
\sum_{w \in \Lambda(\omega;R)}
\frac{1}{2\pi} \int\limits_0^{2\pi} \frac{1}{S} \int\limits_0^S
\chi_p(t \alpha(\comp^\theta \hol(v) )
\,
\overline{\chi_p(t \alpha(\comp^\theta \hol(w) )}
\intd \theta \intd t
\intd \haar(\omega)
=
\bigo \left( \frac{1}{R^\gamma} \right)
\]
for some $0 < \gamma < 1$ by Lemma~\ref{lem:flowLengthEstimate}.
For each $\omega \in K$ consider the inner sum over $w$ above in two parts, according to whether $w$ satisfies \eqref{eqn:radialFriends}.
By Lemma~\ref{lem:unfriendlyCount} and the trivial bound the sum over those $w$ not satisfying \eqref{eqn:radialFriends} is $\bigo(R^{1+\rho})$ which is good enough because $\rho < 1$.
For $w$ that do satisfy \eqref{eqn:radialFriends} we apply Lemma~\ref{lem:radialFriends}.
In combination we obtain the estimate
\begin{align*}
&
\left|
\frac{1}{R^4} \sum_{v \in \Lambda(\alpha;R)} \sum_{w \in \Lambda(\alpha;R)}
\frac{1}{2 \pi} \int\limits_0^{2\pi}
\frac{1}{T} \int\limits_0^T
\chi_p(\nbar \comp^\theta v \nbar[2] + t \alpha(\comp^\theta v))
\chi_p(\nbar \comp^\theta w \nbar[2] + t \alpha(\comp^\theta w))
\intd t \intd \theta
\right|
\\
\le
{}
&
\frac{1}{R^4} (c_4 R^2) (c_3 R^{1+\rho})
+
\frac{1}{R^4} (c_4 R^2)^2
\left(
4 \cdot \frac{1 + \epsilon}{2 \pi R}
+
\frac{8}{\pi^2}
\sqrt{\frac{6|p|}{\epsilon}} \left( \log R + \log \frac{\pi}{4} \right) \frac{R^{\frac{1}{2}}}{R^\rho}
\right)
\end{align*}
which gives the theorem because $1 > \rho > \frac{1}{2}$.
\end{proof}

\subsection{Proof of Theorem~\ref{thm:mainTheorem}}
\label{subsec:finale}

Here we combine the preceding subsections to prove Theorem~\ref{thm:mainTheorem}.

\begin{proof}[Proof of Theorem~\ref{thm:mainTheorem}]
Fix a sequence $\tau_1 > \tau_2 > \cdots \to 1$.
The compact sets $K$ exhause almost all of $\mathcal{H}$ as $\Xi \to \infty$ and $r \to 0$.
It therefore suffices by Lemma~\ref{lem:tauberian} and Lemma~\ref{lem:orderToDiscs} to prove for every $i \in \N$ that \eqref{eqn:orderToDiscs} holds for $\haar$ almost every $\omega \in K$.
Fix $i \in \N$ and write $\tau$ for $\tau_i$.
Let $\gamma$ be as in Theorem~\ref{thm:mainEstimate}.
Taking $\sigma$ small enough and $R^2 = \ceil{\tau^J}/c$ in Theorem~\ref{thm:orbitSmear} and applying using both Corollary~\ref{cor:ellipseEstimate} and Theorem~\ref{thm:mainEstimate} gives $\psi > 0$ such that
\[
\haar \left( \left\{ \omega \in K : \left| \frac{c}{\ceil{\tau^J}} \sum_{v \in \Lambda(\omega;\sqrt{\ceil{\tau^J}/c})} \chi_p(\nbar \hol(v) \nbar) \right|^2 \ge \left( \frac{c}{\ceil{\tau^J}} \right)^{\sigma/2} \right\} \right)
=
\bigo \left( \frac{1}{\ceil{\tau^J}^\psi} \right)
\]
for all $J \in \N$ large enough.
Since the right-hand side is summable we conclude from the Borel-Cantelli lemma that
\[
\lim_{J \to \infty} \frac{c}{\ceil{\tau^J}} \sum_{v \in \Lambda(\omega;\sqrt{\ceil{\tau^J}/c})} \chi_p(\nbar \hol(v) \nbar) = 0
\]
as desired.
\end{proof}

\appendix
\renewcommand{\thesection}{\Alph{section}}

\section{Equidistribution of the lengths of the primitive vectors in integer lattices by Daniel El-Baz and Bingrong Huang}

For a real number $x$, let $\lfloor x \rfloor$ be the largest integer less than or equal to $x$, and let $\{x\}=x-\lfloor x\rfloor$ be its fractional part. Let $\|\cdot\|$ be the usual Euclidean norm in $\mathbb{R}^2$.
Let $\mathbb{Z}_{\mathrm{prim}}^2=\{ (a,b)\in \mathbb{Z}^2: \gcd(a,b)=1 \}$ and $\mathcal{S}_R=\{\mathbf{v} \in \mathbb{Z}_{\mathrm{prim}}^2 \, : \, \|g \mathbf{v}\| \le R\}$.

\begin{theorem}
For every $g\in \mathrm{GL}_2(\mathbb{R})$, the sequence  $(\{\|g \mathbf{v}\|\})_{\mathbf{v}\in \mathcal S_R}$ % of the primitive vectors $\mathbf{v}\in\mathbb{Z}^2$ ordered by the size of $\|g \mathbf{v}\|$
is uniformly distributed as $R\rightarrow \infty$.
\end{theorem}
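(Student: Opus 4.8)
The plan is to combine the Weyl criterion with a power-saving count of primitive lattice points. For fixed $g \in \mathrm{GL}_2(\mathbb{R})$ the region $\{\mathbf{x} \in \mathbb{R}^2 : \|g\mathbf{x}\| \le R\}$ is the $R$-fold dilate of the fixed ellipse $\{\mathbf{x} : \|g\mathbf{x}\| \le 1\}$, a bounded convex domain whose boundary is smooth with everywhere nonvanishing curvature. Writing $N(r) = |\mathcal{S}_r| = |\{\mathbf{v} \in \mathbb{Z}^2_{\mathrm{prim}} : \|g\mathbf{v}\| \le r\}|$, I would invoke the Huxley--Nowak bound~\cite{MR1397317} for primitive lattice points in such a domain to get
\[
N(r) = c\, r^2 + O(r^\theta), \qquad c = \frac{\pi}{\zeta(2)\,|\det g|} > 0,
\]
for some exponent $\theta < 1$, uniformly for $r \ge 1$. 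By the Weyl criterion this reduces the theorem to showing, for each fixed $p \in \mathbb{N}$, that
\[
\frac{1}{N(R)} \sum_{\mathbf{v} \in \mathcal{S}_R} \exp\bigl(2\pi i p \|g\mathbf{v}\|\bigr) \longrightarrow 0 \quad \text{as } R \to \infty.
\]

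To estimate this exponential sum I would express it as a Riemann--Stieltjes integral against the counting measure and integrate by parts:
\[
\sum_{\mathbf{v} \in \mathcal{S}_R} \exp\bigl(2\pi i p \|g\mathbf{v}\|\bigr) = \int_0^R \exp(2\pi i p r)\intd N(r) = \exp(2\pi i p R)\,N(R) - 2\pi i p \int_0^R \exp(2\pi i p r)\,N(r)\intd r.
\]
Substituting $N(r) = cr^2 + E(r)$ splits this into a main contribution from $cr^2$ and an error contribution from $E$. A second integration by parts collapses the main contribution to $2c\int_0^R r\exp(2\pi i p r)\intd r$, which is $O(R/p)$ since $p \ne 0$. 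In the error contribution the boundary term is $O(R^\theta)$ and the remaining integral is bounded trivially by $2\pi p\int_0^R |E(r)|\intd r = O(R^{\theta+1})$, using that $E$ is bounded on $[0,1]$ and $O(r^\theta)$ for $r \ge 1$. Altogether the exponential sum is $O_p\bigl(R^{\max\{1,\,\theta+1\}}\bigr)$, which is $o(R^2)$ because $\theta < 1$. Dividing by $N(R) \sim cR^2$ yields a bound of order $R^{\theta-1} \to 0$, and the Weyl criterion then finishes the argument.

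The one genuinely nontrivial ingredient is the power saving $\theta < 1$ for \emph{primitive} lattice points, and this is where I expect all the difficulty to sit. Indeed, naive Möbius inversion of the ordinary lattice-point count (which satisfies an error $O(r^{\theta_0})$ with $\theta_0 < 1$ by Huxley) only yields $N(r) = cr^2 + O(r)$ for the primitive count, because $\sum_{d \le r} d^{-\theta_0} \asymp r^{1-\theta_0}$ eats the saving; an error of exactly $O(r)$ is useless here, since it would contribute $\int_0^R |E(r)|\intd r = O(R^2)$, which is not $o(R^2)$. The content of \cite{MR1397317} is precisely that the cancellation between the Möbius function and the oscillating ordinary error term pushes the primitive error strictly below $r$, and that is exactly the estimate the proof consumes. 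Everything else --- the Weyl reduction and the partial summation --- is routine once that bound is available; in particular the hypothesis $g \in \mathrm{GL}_2(\mathbb{R})$ rather than $\mathrm{SL}_2(\mathbb{R})$ costs nothing, entering only through the constant $c$ and the fixed shape of the ellipse.
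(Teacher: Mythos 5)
Your overall strategy (Weyl criterion plus partial summation against the primitive counting function) is sound and genuinely different from the paper's proof, but as written it consumes an input that is not available. You invoke \cite{MR1397317} for an \emph{unconditional} power saving $N(r) = c r^2 + O(r^{\theta})$ with $\theta < 1$. That is not what Huxley and Nowak prove, and no such unconditional bound is known: for \emph{primitive} lattice points the error term is governed by M\"obius cancellation, i.e.\ by the zero-free region of the Riemann zeta function, and an unconditional error $O(r^{1-\delta})$ would be a quasi-Riemann-hypothesis--strength statement. What their paper establishes unconditionally --- the ``prime number theorem'' estimate, Equation 1.6, which is exactly what the appendix cites --- is an error term that is $o(r)$ (the gain over $r$ is far smaller than any power of $r$); a genuine power saving appears in their paper only conditionally on the Riemann Hypothesis. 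So your closing claim that ``the content of \cite{MR1397317} is precisely that the cancellation \dots pushes the primitive error strictly below $r$'' is correct only if ``strictly below'' is read as $o(r)$, not as $O(r^{\theta})$ with $\theta<1$.

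Fortunately your argument never actually needs the power saving, so the gap is repairable on the spot. The error contribution in your partial summation is bounded by $|E(R)| + 2\pi p \int_0^R |E(r)| \intd r$, and $E(r) = o(r)$ already gives $\int_0^R |E(r)| \intd r = o(R^2)$; hence each Weyl sum is $o_p(R^2)$, and dividing by $N(R) \sim c R^2$ gives $o_p(1)$, which is all the Weyl criterion requires (your final ``bound of order $R^{\theta-1}$'' simply becomes $o(1)$ with no rate). With that one correction the proof is complete, and it is a legitimately different route from the paper's: the appendix avoids the Weyl criterion altogether and directly estimates $N_R(\alpha) = \#\{ \mathbf{v} \in \mathcal{S}_R : \{\|g\mathbf{v}\|\} \le \alpha \}$ by telescoping $P(d+\alpha) - P(d)$ over integer shells $d \le \lfloor R \rfloor$, summing main terms $\frac{6}{\pi^2}\mathrm{area}(\mathcal{D}_g)\, 2d\alpha + o(d)$ to get $\frac{6}{\pi^2}\mathrm{area}(\mathcal{D}_g) R^2 \alpha + o(R^2)$. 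Both proofs consume exactly the same $o(y)$ input from \cite{MR1397317}; the paper's is somewhat more elementary (no oscillatory integrals or integration by parts), while yours makes transparent exactly how much error one can afford --- in particular it isolates, as you correctly note, why an error of exactly $O(r)$ from naive M\"obius inversion would not suffice.
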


\begin{proof}
  We want to estimate, for every $\alpha \in (0,1)$, the following quantity:
  \begin{equation*}
    N_R(\alpha) = \#\{ \mathbf{v} \in \mathcal{S}_R \, : \, \{\|g \mathbf{v}\|\} \le \alpha \}.
  \end{equation*}
  We rewrite
  \begin{align*}
    N_R(\alpha) &= \sum_{d \le \lfloor R \rfloor} \sum_{\substack{d \le \|g \mathbf{v}\| \le d + \alpha \\ \|g \mathbf{v}\| \le R}} 1.
  \end{align*}
  %Let $\zeta(s)$ be the Riemann zeta function and $\chi$ the non principal character mod $4$. The generating series of $r_2^*(n)$ is $4\zeta(s)L(s,\chi)/\zeta(2s)$. By the zero-free region of $\zeta(s)$,
  Let $\mathcal D_g = \{ \mathbf{x}\in\mathbb{R}^2 : \|g\mathbf{x}\| \leq 1 \}$. 
  %We know that the boundary of $\mathcal D_g$ is an ellipse with its center at the origin.
  By the prime number theorem estimate
  for the primitive lattice point problem for a compact convex domain with smooth boundary (which our elliptic domain $\mathcal{D}_g$ certainly is) \cite[Equation 1.6]{MR1397317}, we have
  \[
    P(y) := \#\{  \mathbf{v}\in \mathbb{Z}_{\mathrm{prim}}^2 \, : \, \|g \mathbf{v}\| \le y\} = \sum_{\substack{\mathbf{v}\in \mathbb{Z}_{\mathrm{prim}}^2 \\ \|g \mathbf{v}\|\leq y}} 1 = \frac{6}{\pi^2} \mathrm{area}(\mathcal D_g) y^2 + o(y).
  \]
  Hence we obtain
  \begin{align*}
    N_R(\alpha) 
    & = \sum_{d \le \lfloor R \rfloor - 1} ( P(d+\alpha) -P(d) )
    + O(R)  \\
    & = \sum_{d \le \lfloor R \rfloor - 1} \left(\frac 6{\pi^2} \mathrm{area}(\mathcal D_g) 2d\alpha + o(d) \right)
    + O(R) \\
    & = \frac 6{\pi^2} \mathrm{area}(\mathcal D_g) R^2 \alpha + o(R^2).
  \end{align*}
  This implies our claim.
\end{proof}

\printbibliography

\end{document}